 \def\LaTeX{\leavevmode L\raise.42ex
   \hbox{\kern-.3em\size{\sf@size}{0pt}\selectfont A}\kern-.15em\TeX}
\newcommand{\BibTeX}{{\rm B\kern-.05em{\sc
i\kern-.025emb}\kern-.08em\TeX}}
\newtheorem{thm}{Theorem}[section]
\newtheorem{lem}[thm]{Lemma}
\newtheorem{rem}[thm]{Remark}
\theoremstyle{defn}
\newtheorem{defn}{Definition}
\numberwithin{equation}{section}
\begin{document}

\title[Almost Parseval frames on  noncompact symmetric spaces]{ Paley-Wiener-Schwartz nearly Parseval frames and Besov spaces on  noncompact symmetric spaces}

\author{Isaac Z. Pesenson }\footnote{ Department of Mathematics, Temple University,
 Philadelphia,
PA 19122; pesenson@temple.edu. The author was supported in
part by the National Geospatial-Intelligence Agency University
Research Initiative (NURI), grant HM1582-08-1-0019. }

\subjclass[2000]{Primary 43A85; 42C40; 41A17;
 41A10 }

\begin{abstract}
Let $X$ be a symmetric space of the noncompact type. The goal of the paper is to construct in the space $L_{2}(X)$ nearly Parseval frames consisting of functions which simultaneously  belong to Paley-Wiener spaces and to Schwartz space on $X$. We call them Paley-Wiener-Schwartz frames in $L_{2}(X)$. These frames are used to characterize a family of Besov spaces on $X$. As a part of our construction we develop on $X$ the so-called average Shannon-type sampling.

\end{abstract}

\maketitle

\section{Introduction}
Wavelet systems  and frames which build up of bandlimited functions with a strong localization on the space  became very popular in theoretical and applied analysis on Euclidean spaces. Frames with similar properties were  recently constructed in non-Euclidean settings in $L_{2}$-spaces on spheres \cite{NPW}, compact Riemannian manifolds \cite{gm3}-\cite{gpes},  certain metric-measure spaces \cite{CKP}. In \cite{gpes} Parseval bandlimited and localized frames were developed on compact homogeneous manifolds. Bandlimited ($\equiv$ Paley-Wiener) frames  in Paley-Wiener spaces on noncompact manifolds of bounded geometry (in particular on noncompact symmetric spaces) were constructed in \cite{Pes00}-\cite{Pes11}, \cite{FP04}, \cite{FP05}, \cite{CGSM}, \cite{EEKK1}, \cite{EEKK2}. In \cite{Pes98a}, \cite{Pes98b} bandlimited  frames in Paley-Wiener spaces (in subelliptic framework) were constructed on stratified Lie groups. Bandlimited and localized nearly Parseval frames on domains in $\mathbb{R}^{n}$ with smooth boundaries can be found in \cite{Pes12}.

The goal of our development is to construct Paley-Wiener localized (Schwartz) frames in $L_{2}$ spaces on symmetric manifolds of the noncompact type. It should be noted that such manifolds do not satisfy requirements of the paper \cite{CKP}. 

A Riemannian symmetric space of the noncompact type is a
Riemannian manifold $X$ of the form $X=G/K$ where $G$ is a
connected semisimple Lie group with finite center and $K$ is a
maximal compact subgroup of $G$. The most known examples of such spaces are the real, complex, and quaternionic hyperbolic spaces. 

Let $f$ be a function in the space  $L_{2}(X, d\mu(x))=L_{2}(X)$, where
$X$ is a symmetric space of noncompact type and $dx$ is an
invariant measure. The notations $\mathcal{F}f=\widehat{f}$ will be used for the
Helgason-Fourier transform of $f$. The Helgason-Fourier transform
$\widehat{f}$ can be treated as a function on
$\mathbb{R}^{n}\times \mathcal{B}$ where $\mathcal{B}$ is a
certain compact homogeneous manifold and $n$ is the rank of $X$.
Moreover, $\widehat{f}$ belongs to the space
$
 L_{2}\left(\mathbb{R}^{n}\times
\mathcal{B};|c(\lambda)|^{-2}d\lambda db \right),
$
where $c(\lambda)$ is the Harish-Chandra's function, $d\lambda$ is
the Euclidean measure and $db$ is the normalized invariant measure
on $\mathcal{B}$. The notation
$\Pi_{[\omega_{1},\>\omega_{2}]}\subset \mathbb{R}^{n}\times \mathcal{B},\>\>0<\omega_{1}<\omega_{2},$ will be
used for the set of all points $(\lambda, b)\in
\mathbb{R}^{n}\times \mathcal{B}$ for which
$\omega_{1}\leq \sqrt{\left<\lambda,\lambda\right>}\leq \omega_{2},$ where $\left<\cdot,
\cdot\right>$ is the Killing form. In particular the notation 
$\Pi_{\omega}\subset \mathbb{R}^{n}\times \mathcal{B}$ will be
used for $\Pi_{[0,\>\omega]}.$

The  Paley-Wiener space $PW_{[\omega_{1}, \>\omega_{2}]}(X), 0<\omega_{1}<\omega_{2},$ is defined as
the set of all functions in  $L_{2}(X)$ whose Helgason-Fourier
transform has support in $\Pi_{[\omega_{1},\>\omega_{2}] }$ and belongs to the space
$
\Lambda_{\omega}=L_{2}\left(\Pi_{[\omega_{1},\>\omega_{2}] };|c(\lambda)|^{-2}d\lambda
db\right).
$
In particular, $PW_{ \omega}(X)$ will be used for $PW_{[0,\> \omega]}(X)$.

Using  the $K$-invariant distance on $X$  to the "origin" (see formula (\ref{X-norm}) below) one can introduce a  notion of an $L_{2}$-Schwartz space $S^{2}(X)$ (see Definition \ref{Ss} below) which was considered by M. Eguchi \cite{Egu}. A theorem of M. Eguchi \cite{Egu} states that  if a function $f$ is in $C_{0}^{\infty}(\mathbb{R}^{n}\times \mathcal{B})$ and satisfies certain symmetry conditions (see (\ref{symmetry} below))  then its inverse Helgason-Fourier transform is a function in $S^{2}(X)$.
 A refinement of this result was  given by N. B. Andersen in \cite{A} (see  Theorems \ref{E} and \ref{A} below).

Although some facts that we discuss in this development already appeared in our previous papers the main result about 
 existence of Paley-Wiener-Schwartz nearly Parseval frames in $L_{2}(X)$ is completely new. Here is a formulation of our main Theorem. 
\begin{thm}\label{FrTh}
 Suppose that  $X$ is a  Riemannian symmetric spaces of the noncompact type.
  For every $0<\delta<1$ there exists a countable family of functions $\{\Theta_{j , \gamma}\}$ such that
 
 \begin{enumerate}
 
 \item Every function $\Theta_{j, \gamma}$ is bandlimited to $[2^{j-1}, 2^{j+1}]$ in the sense that  $\Theta_{j, \gamma}\in PW_{[2^{j-1}, 2^{j+1}]}(X)$.
 
 \item Every function $\Theta_{j, \gamma}$ belongs to $S^{2}(X)$. 
 
 \item $\{\Theta_{j , \gamma}\}$  is a frame in $L_{2}(X)$ with constants $1-\delta$ and $1+\delta$, i.e.
 $$
 (1-\delta)\|f\|^{2}\leq \sum_{j\in \mathbb{N}}\sum_{\gamma}\left|\left<f, \Theta_{j, \gamma}\right>\right|^{2}\leq (1+\delta)\|f\|^{2},\>\>\>f\in L_{2}(X).
 $$

 \end{enumerate}
 
 \end{thm}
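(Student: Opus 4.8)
The plan is to build the frame from a smooth dyadic partition of unity on the Helgason-Fourier side, pull it back to $X$ via the inverse transform, and then discretize each dyadic band using an average sampling theorem. Let me think about how to organize this.

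First, I'd take a nonnegative smooth function $g$ supported in $[1/2, 2]$ (or $[1/4, 4]$ to get overlap) with the Littlewood-Paley property $\sum_j g(2^{-j} s)^2 = 1$ for $s > 0$. Composing with the spectral parameter $|\lambda| = \sqrt{\langle \lambda, \lambda\rangle}$, I get functions $F_j(\lambda, b) = g(2^{-j}|\lambda|)$ that are $C_0^\infty$ in the radial variable, supported in the annulus $\Pi_{[2^{j-1}, 2^{j+1}]}$, and satisfy $\sum_j F_j^2 = 1$.

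Then I invoke the Eguchi/Andersen theorem: because $F_j$ is $C_0^\infty(\mathbb{R}^n \times \mathcal{B})$ and (being radial in $\lambda$, independent of $b$) satisfies the required symmetry/Weyl-invariance conditions (symmetry), its inverse Helgason-Fourier transform $\psi_j$ lies in $S^2(X)$ and in $PW_{[2^{j-1}, 2^{j+1}]}(X)$. The Plancherel theorem for the Helgason-Fourier transform then gives, for every $f \in L_2(X)$,
$$
\|f\|^2 = \sum_j \|F_j \widehat{f}\|_{\Lambda}^2 = \sum_j \|f * \psi_j\|^2,
$$
so the continuous family $\{f \mapsto f * \psi_j\}$ reproduces the norm exactly. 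This reduces the problem to discretizing each band.

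The main obstacle is the discretization step. For each fixed $j$, I need to sample $f * \psi_j$ on a sufficiently dense discrete set $\{x_{j,\gamma}\}$ and represent the band-norm $\|f * \psi_j\|^2$ by a sum of squared samples (or inner products) up to an error controlled by $\delta$. This is precisely where the average Shannon-type sampling promised in the abstract enters: one covers $X$ by small patches $\{U_{j,\gamma}\}$ of measure adapted to the bandwidth $2^{j+1}$, picks the average of $f*\psi_j$ over each patch, and uses a Bernstein-type inequality for Paley-Wiener functions (gradient bounds on bandlimited functions) to show that the averaged sampling operator is close to the identity on $PW_{[2^{j-1},2^{j+1}]}(X)$ — close enough that the resulting discrete quadratic form differs from $\|f*\psi_j\|^2$ by at most a factor $\delta$. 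Choosing the patch sizes small enough (depending on $j$) makes the product functions $\Theta_{j,\gamma}$, defined as averaged translates of $\psi_j$ over $U_{j,\gamma}$ (weighted by $\mu(U_{j,\gamma})^{1/2}$), satisfy $\langle f, \Theta_{j,\gamma}\rangle \approx \mu(U_{j,\gamma})^{1/2}\,\overline{(f*\psi_j)(x_{j,\gamma})}$.

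Finally I'd assemble the pieces: since each $\Theta_{j,\gamma}$ is a finite average of translates of $\psi_j$ it inherits membership in $PW_{[2^{j-1},2^{j+1}]}(X)$ (property (1)) and in $S^2(X)$ (property (2), using invariance of $S^2(X)$ under the relevant averaging and the fact that $S^2(X)$ is closed under the group action). Summing the per-band estimates and using $\sum_j F_j^2 = 1$ together with the almost-tight discretization gives
$$
(1-\delta)\|f\|^2 \le \sum_{j}\sum_{\gamma}\bigl|\langle f, \Theta_{j,\gamma}\rangle\bigr|^2 \le (1+\delta)\|f\|^2,
$$
which is property (3). The delicate bookkeeping is making the sampling density uniform enough across all $j$ so that a single $\delta$ works simultaneously for every band; I expect this uniform control, via Bernstein inequalities with constants tracked explicitly in the bandwidth, to be the technical heart of the argument.
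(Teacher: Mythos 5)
Your proposal follows essentially the same route as the paper: a dyadic spectral partition $\sum_j F_j^2=1$ reducing matters to the bands $PW_{[2^{j-1},2^{j+1}]}(X)$, discretization of each band by weighted averages over patches of an $r_j$-lattice with $r_j\sim \delta^{1/d}(2^{2j+2}+\|\rho\|^2)^{-1/2}$, controlled by exactly the Poincar\'e-plus-Bernstein mechanism you describe (the paper's Section \ref{AvS}, Theorem \ref{Frame-th}), and frame elements obtained by applying the self-adjoint band operator $F_j(\Delta)$ to the normalized local weights, so that $\left<f,\Theta\right>=\left<F_j(\Delta)f,\theta\right>$ and the per-band estimates sum via $\sum_j\|F_j(\Delta)f\|^2=\|f\|^2$. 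The only deviations are minor: the paper uses smooth weights $\psi_{j,k}\varphi_{\nu}$ rather than your plain characteristic-function averages, and it proves membership in $S^{2}(X)$ by applying Eguchi's Theorem \ref{E} to $F_j\cdot\mathcal{F}(\psi_{j,k}\varphi_{\nu})\in C_{0}^{\infty}(\textbf{a}^{*}\times\mathcal{B})^{W}$ rather than by your plausible but unproven appeal to invariance of $S^{2}(X)$ under averaged translations, an argument the Eguchi route would cover in your setting as well.
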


In section \ref{HarmAnal} we summarize basic facts about  harmonic analysis on Riemannian symmetric spaces of the noncompact type. In subsection \ref{Riem} we prove a  covering Lemma for Riemannian  manifolds  of bounded geometry whose Ricci curvature is bounded from below. In section \ref{PW-100} we introduce Paley-Wiener spaces $PW_{\omega}(X),\>\>\>\omega>0$. In section \ref{AvS} we develop average sampling and almost Parseval frames in Paley-Wiener spaces on Riemannian manifolds. The main result is obtained in section \ref{FFFF}  where we construct nearly Parseval Paley-Wiener-Schwartz  frames in $L_{2}(X)$. In the last section we characterize Besov spaces $\mathbf{B}_{2, q}^{\alpha}(X)$ in terms of coefficients with respect to constructed frames. The entire scale of Besov spaces will be considered in a separate paper.

\section{ Harmonic analysis on Riemannian symmetric spaces of the noncompact type}\label{HarmAnal}

\subsection{ Riemannian symmetric spaces of the noncompact type}

A Riemannian symmetric space of the noncompact type is a
Riemannian manifold $X$ of the form $X=G/K$ where $G$ is a
connected semisimple Lie group with finite center and $K$ is a
maximal compact subgroup of $G$. The Lie algebras of the groups
$G$ and $K$ will be denoted respectively as $\textbf{g}$ and
$\textbf{k}$. The group $G$ acts on $X$ by left translations. If
$e$ is the identity in $G$  then the base point  $eK$ is denoted
by $0$. Every such $G$ admits Iwasawa decomposition $G=NAK$, where
the nilpotent Lie group $N$ and the abelian group $A$ have Lie
algebras $\textbf{n}$ and $\textbf{a}$ respectively. Correspondingly $\textbf{g}=\textbf{n} \bigoplus\textbf{a} \bigoplus\textbf{k}.$ The dimension
of $\textbf{a}$ is known as the rank of $X$.
 The letter $M$ is usually used to denote the centralizer of $A$ in
$K$ and the letter $\mathcal{B}$ is commonly used for the
homogeneous space $K/M$.

The Killing form on $\textbf{g}$ induces an Ad$K$-invariant inner product on $\textbf{n} \bigoplus\textbf{a}$ which generates a $G$-invariant Riemannian metric on $X$. With this metric $X=G/K$ becomes a Riemannian globally symmetric space of the noncompact type.

Let $\textbf{a}^{*}$ be the real dual of $\textbf{a}$ and $W$ be
the Weyl's group. Let $\Sigma$  be the set of
restricted roots, and $\Sigma^{+}$ will be the set of all positive
roots. The notation $\textbf{a}^{+}$ has the following meaning
$$
\textbf{a}^{+}=\{H\in \textbf{a}|\alpha(H)>0, \alpha\in
\Sigma^{+}\}
$$
 and is known as positive Weyl's chamber. Let $\rho\in
 \textbf{a}^{*}$ is defined in a way that $2\rho$ is the sum of
 all  positive restricted roots. The Killing form $<,>$ on $\textbf{a}$
 defines a
 metric on $\textbf{a}$. By duality it defines an inner  product on
 $\textbf{a}^{*}$.

 We denote by  $\textbf{a}^{*}_{+}$ the set of
 $\lambda\in \textbf{a}^{*}$, whose dual belongs to
 $\textbf{a}^{+}$.
According to Iwasawa decomposition for every $g\in G$ there exist
a unique $A(g), \>H(g)\in \textbf{a}$ such that
$$g=n \exp A(g) k=k\exp H(g)n,\>\> k\in K, \>\>n\in N,\>\>A(g)=-H(g^{-1}),
$$
 where $\exp :\textbf{a}\rightarrow A$ is the exponential map of
 the
 Lie algebra $\textbf{a}$ to Lie group $A$. On the direct product
 $X\times \mathcal{B}$ we introduce function with values in $\textbf{a}$
 using the formula
 \begin{equation}
 A(x,b)=A(u^{-1}g)
 \end{equation}
 where $x=gK, g\in G, b=uX, u\in K$.

According to Cartan decomposition every element $g$ of $G$ has representation $g=k_{1}\exp(H)k_{2}$, where $H$ belongs to the closure of $\exp \textbf{a}^{+}$. The norm of an $g$ in $G$ is introduced as
\begin{equation}\label{X-norm}
|g|=|k_{1}\exp(H)k_{2}|=\|H\|.
\end{equation}
It is the $K$-invariant geodesic distance on $X$  of $gK$ to $eK$.

\subsection{ A covering Lemma for Riemannian  manifolds  of bounded geometry whose Ricci curvature is bounded from below.}\label{Riem}

Let $X$, dim$X=d$, be a connected $C^{\infty}-$smooth Riemannian
manifold with a $(2,0)$ metric tensor $g$ that defines an inner
product on every tangent space $T_{x}(X), x\in X$. The
corresponding Riemannian distance $d$ on $X$ and
the Riemannian measure $d\mu(x)$ on $X$ are given by
$$
d(x,y)=inf\int_{a}^{b}
\sqrt{g\left(\frac{d\alpha}{dt},\frac{d\alpha}{dt}\right)}dt,\>\>\>\>d\mu(x)=\sqrt{|det(g_{ij})|}dx,
$$
where the infimum is taken over all $C^{1}-$curves
$\alpha:[a,b]\rightarrow X, \alpha(a)=x, \alpha(b)=y,$
the $\{g_{ij}\}$ are the components of the tensor $g$ in a
local coordinate system and  $dx$ is the Lebesgue's measure in
$R^{d}$.  Let $ exp_ {x} $ : $T_{x}(X)\rightarrow X,$ be the exponential
geodesic map i. e. $exp_{x}(u)=\gamma (1), u\in T_{x}(X),$ where
$\gamma (t)$ is the geodesic starting at $x$ with the initial
vector $u$ : $\gamma (0)=x , \frac{d\gamma (0)}{dt}=u.$ 
We denote by $\operatorname{inj}$ the largest real number $r$ such that $exp_x$ is a diffeomorphism of a suitable open neighborhood of $0$ in $T_x X$ onto $B(x,r)$, for all sufficiently small $ r$ and $x \in X$. Thus for every choice of an orthonormal basis (with respect to the inner
 product defined by $g$) of $T_{x}(X)$ the exponential map
  $exp$ defines a
 coordinate system on $B(x, r)$ which is called {\it geodesic}.
The volume of the ball $B(x, r)$ will be denoted
by $|B(x, r)|.$
Throughout the paper we will consider only geodesic coordinate
 systems.

A Riemannian symmetric space $X$ equipped with an invariant metric has bounded geometry which mean that 

(a) \textit {$X$ is complete and connected;}

(b)  \textit {the injectivity radius $\operatorname{inj}(X)$ is positive;}

(c)  \textit {for any $r\leq \operatorname{inj}(X)$, and for every two canonical
coordinate systems \\ $\vartheta_{x}: T_{x}(X)\rightarrow B(x, r),
\vartheta_{y}:T_{y}(X)\rightarrow B(x, r),$ 
the following inequalities holds true:}
$$
\sup _{x\in B(x, r)\cap B(y, r)}\sup_{|\alpha|\leq k}|\partial
^{|\alpha|}\vartheta_{x}^{-1}\vartheta_{y}|\leq C( r, k).
$$
The Ricci curvature $Ric$ of $X$ is bounded from below, i.e.
\begin{equation}\label{Ric}
Ric\geq-kg, \>\>\>k\geq 0
\end{equation}
According to  the Bishop-Gromov Comparison Theorem this fact implies
the so-called {\it local doubling property}:  for any 
 $0 <\sigma<\lambda< r< \operatorname{inj}(X)$:
\begin{equation}\label{BG}
|B(x,\lambda)|\leq\left(\lambda/\sigma\right)^{d}e^{(k
r(d-1))^{1/2}} |B(x,\sigma)|,\>d=dim\>X.
\end{equation}

We will need the following lemma which was proved in \cite{Pes04b}.
\begin{lem}\label{cover}
If $X$ is a Riemannian manifold  of bounded geometry and its  Ricci curvature is bounded from below then there exists a natural $N_{X}$ such that for any
$0<r<\operatorname{inj}\>(X)$ there exists a set of points 
$X_{r}=\{x_{i}\}$ with the following properties
\begin{enumerate}
\item  \textit {the balls $B(x_{i},  r/4)$ are disjoint,}
\item  \textit {the balls $B(x_{i}, r/2)$ form a cover of $X$,}
\item  \textit {the height of the cover by the balls $B(x_{i},r)$ is not
greater than $N_{X}.$}
\end{enumerate}
\end{lem}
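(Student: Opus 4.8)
The plan is to obtain $X_r$ as a maximal $r/4$-packing and then verify the three properties in turn, the first two being formal consequences of maximality and the third resting on the volume comparison (\ref{BG}).

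First I would consider the family of all subsets $S\subseteq X$ for which the balls $\{B(x,r/4):x\in S\}$ are pairwise disjoint, partially ordered by inclusion. Every chain has an upper bound (its union), so Zorn's lemma produces a maximal element $X_r=\{x_i\}$. Property (1) holds by construction. For property (2), suppose some $y\in X$ satisfied $d(y,x_i)\geq r/2$ for all $i$; then $B(y,r/4)$ would be disjoint from every $B(x_i,r/4)$, and $X_r\cup\{y\}$ would be a strictly larger admissible set, contradicting maximality. Hence every $y$ lies within $r/2$ of some $x_i$, i.e.\ the balls $B(x_i,r/2)$ cover $X$.

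The substantive part is property (3). Fix $y\in X$ and let $I(y)=\{i:y\in B(x_i,r)\}$, so $d(y,x_i)<r$ for $i\in I(y)$. For such $i$ the triangle inequality gives the two inclusions $B(x_i,r/4)\subseteq B(y,5r/4)$ and $B(y,5r/4)\subseteq B(x_i,9r/4)$. Applying the Bishop--Gromov comparison underlying (\ref{BG}) at the center $x_i$ yields $|B(x_i,9r/4)|\leq C\,|B(x_i,r/4)|$, where $C=V_k(9r/4)/V_k(r/4)$ and $V_k(R)$ denotes the volume of a radius-$R$ ball in the simply connected model space of constant curvature $-k$, with $k$ as in (\ref{Ric}). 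Combining, $|B(y,5r/4)|\leq C\,|B(x_i,r/4)|$ for every $i\in I(y)$. Since the balls $\{B(x_i,r/4)\}_{i\in I(y)}$ are disjoint and all contained in $B(y,5r/4)$ by property (1), summing gives
\[
\#I(y)\cdot C^{-1}|B(y,5r/4)|\leq \sum_{i\in I(y)}|B(x_i,r/4)|\leq |B(y,5r/4)|,
\]
whence $\#I(y)\leq C$.

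Finally I would check that $C$ can be taken independent of $r$ and $y$: the comparison constant $C=V_k(9r/4)/V_k(r/4)$ depends only on $r$ (given $k$ and $d=\dim X$), is non-decreasing in $r$, and is therefore bounded on the range $0<r<\operatorname{inj}(X)$ by $N_X:=V_k\left((9/4)\operatorname{inj}(X)\right)/V_k\left((1/4)\operatorname{inj}(X)\right)$, a constant depending only on $X$. Then $\#I(y)\leq N_X$ for all $y$, which is exactly the assertion that the height of the cover by $\{B(x_i,r)\}$ is at most $N_X$. The one point demanding care is that the radius $9r/4$ exceeds $\operatorname{inj}(X)$, so the comparison must be invoked in its global Bishop--Gromov form rather than the purely local statement (\ref{BG}); this is legitimate because the lower Ricci bound (\ref{Ric}) holds on all of $X$ and Bishop--Gromov requires no injectivity hypothesis. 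Securing this global comparison at scales above $\operatorname{inj}(X)$, together with the resulting $r$-uniformity of $N_X$, is the main obstacle.
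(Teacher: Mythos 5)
Your proof is correct and shares its skeleton with the paper's --- the paper likewise takes a maximal family of disjoint balls $B(x_i,r/4)$ (phrased as "no ball $B(x,r/4)$ is disjoint from all balls of the family"), so properties (1) and (2) are obtained identically --- but your treatment of the multiplicity bound (3) is genuinely different. The paper counts the balls $B(x_i,r)$ meeting a fixed $B(x_j,r)$, observes they all lie in $B(x_j,3r)$, and estimates $N\leq \sup_{y}|B(y,3r)|/\inf_{x}|B(x,r/4)|\leq C(X)\,b\,12^{d}$ using two ingredients: the uniform comparability (\ref{bb}) of ball volumes at equal radii \emph{across different centers}, which comes from the bounded-geometry hypotheses (a)--(c), combined with the local doubling (\ref{BG}) at a single center. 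You instead recenter through the chain $B(x_i,r/4)\subset B(y,5r/4)\subset B(x_i,9r/4)$ and invoke Bishop--Gromov at the one center $x_i$, so your bound on the multiplicity uses only completeness and the Ricci lower bound (\ref{Ric}) and dispenses with bounded geometry entirely; this buys a more self-contained argument with an explicit constant, while the paper's route buys a bound assembled from inequalities it has already recorded. The "main obstacle" you flag at the end is in fact no obstacle: Bishop--Gromov on a complete manifold with a global Ricci lower bound holds at all radii with no injectivity hypothesis, exactly as you say --- and note that the paper's own proof tacitly does the same when it estimates $|B(x_j,3r)|$, even though its stated inequality (\ref{BG}) requires all radii below $\operatorname{inj}(X)$.

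One genuine caveat, which you correctly sense but which is shared by the paper rather than special to your argument: the $r$-uniformity of $N_X$ requires the range of $r$ to be bounded. If $\operatorname{inj}(X)=\infty$ (as for symmetric spaces of noncompact type), your $N_X=V_k\bigl((9/4)\operatorname{inj}(X)\bigr)/V_k\bigl((1/4)\operatorname{inj}(X)\bigr)$ is infinite, and this is unavoidable: in hyperbolic space the covering multiplicity of such lattices genuinely grows as $r\to\infty$, so no uniform $N_X$ over all $r>0$ exists. The paper's constant has the same defect (it absorbs the factor $e^{(kr(d-1))^{1/2}}$ over the same range), and it is harmless in the applications, where only $r$ below a fixed small scale is used. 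Two cosmetic points: with the normalization $Ric\geq -kg$ of (\ref{Ric}) the model space should have curvature $-k/(d-1)$, not $-k$; and the monotonicity of $r\mapsto V_k(9r/4)/V_k(r/4)$, while true, is not needed --- continuity together with the finite limit $9^{d}$ as $r\to 0^{+}$ already bounds it on any finite range.
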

\begin{proof}
 Assumptions (a)-(c)  imply that there exist constants $a, b>0$ such that
\begin{equation}\label{bb}
a\leq\frac{|B(x, r)|}{|B(y, r)|} \leq b, x,y\in X,
r< \operatorname{inj}(X),
\end{equation}
 where $\operatorname{inj}(X)$ is the injectivity radius
 of the manifold.
Let us choose a family of disjoint balls $B(x_{i}, r/4)$ such
that there is no ball $B(x, r/4), x\in X,$ which has empty
intersections with all balls from our family. Then the family
$B(x_{i},r/2)$ is a cover of $X$. Every ball from the family
$\{B(x_{i}, r)\}$ having non-empty intersection with a
particular ball $B(x_{j}, r)$ is contained in the ball
$B(x_{j}, 3r)$. Since any two balls from the family
$\{B(x_{i},r/4)\}$ are disjoint, the inequalities (\ref{bb}) and (\ref{BG}) give the following
estimate for the multiplicity $N$ of the covering
$\{B(x_{i},r)\}$:
\begin{equation}
N\leq\frac{\sup_{y\in X}|B(y,3r)|}{\inf_{x\in X}|B(x,r/4)|} \leq C(X)b
12^{d}=N_{X},\>\>\>d=dim\>X.
\end{equation}
Thus the lemma is proved.
\end{proof}

\begin{defn}
Every set of point $X_{r}=\{x_{i}\}$ that satisfies conditions of Lemma \ref{cover} will be called a $r$-lattice.
\end{defn}

 To construct Sobolev spaces $H^{k}(X), \>\>
k\in \mathbb{N},$ we fix a $\lambda$-lattice $X_{\lambda}=\{y_{\nu}\}, \>\>0< \lambda< \operatorname{inj} (X)$ and introduce a partition of unity  ${\varphi_{\nu}}$ that is 
{\it subordinate to the family} $\{B(y_{\nu},\lambda/2)\}$ and has the
following properties:
\begin{enumerate}
\item $\varphi_{\nu}\in C_{0}^{\infty} B(y_{\nu},\lambda/2 ),$

\item $\sup_{x}\sup_{|\alpha|\leq k}|\varphi_{\nu}^{(\alpha
)}(x)|\leq C(k), $ where $ C(k) $ is independent on $\nu$ for
every $k $ in geodesic  coordinates.
\end{enumerate}

The exponential map $exp_{y_{\nu}}:T_{y_{\nu}}M\rightarrow M$ is a diffeomorphism of a ball $B_{T_{y_{\nu}}}(0, r)\subset T_{y_{\nu}}M$ with center $0$ and of radius $r$ on a ball $B(y_{\nu}, r)$ in Riemannian metric on $M$ (assuming that $r>0$ is sufficiently small). If $M$ is homogeneous  and a metric is invariant then  every ball $B(y_{\nu}, r)$ is a translation on a single ball. In this case there exist two constants $c_{1},\>C_{1}$ such that for any ball $B(x, \>r)$ with $x\in B(y_{\nu}, \>r)$ and $\rho<r$ one has 
\begin{equation}\label{ball-condtion}
B_{T_{y_{\nu}}}(exp_{y_{\nu}}^{-1}(x), c_{1}r)
\subset exp_{y_{\nu}}^{-1}\left(B(x, \>r)\cap B(y_{\nu}, r)\right)\subset B_{T_{y_{\nu}}}(exp_{y_{\nu}}^{-1}(x), C_{1}r).
\end{equation}
Note that for a Riemannian measure $d\mu(x)$ and a locally integrable function $F$ on $U\subset M$ the integral of $F$ over $U$ is defined as follows
\begin{equation}\label{integral}
\int_{U}F(x)d\mu(x)=\int _{exp^{-1}_{y_{\nu}}(U)}F \circ exp_{y_{\nu}}(x_{1},...,x_{d})\sqrt{|det(g_{ij})|}\>dx_{1}...dx_{d},
\end{equation}
where $g_{ij}=g(\partial_{i}, \>\partial_{j})$, and $g$ is the Riemann inner product in tangent space. By choosing a basis $\partial_{1},...,\partial_{d}$,  which is orthonormal with respect to $g$ we obtain $|det(g_{ij})|=1$.

We introduce the {\it Sobolev space} $H^{k}(X), k\in \mathbb{N},$ as the
completion of $C_{0}^{\infty}(X)$ with respect to the norm
\begin{equation}\label{Sob}
\|f\|_{H^{k}(X)}=\left(\sum_{\nu}\|\varphi_{\nu}f\|^{2}
_{H^{k}(B(y_{\nu}, \lambda/2))}\right) ^{1/2},
\end{equation}
where 
$$\|\varphi_{\nu}f\|^{2}
_{H^{k}(B(y_{\nu}, \lambda/2))}=\sum_{1\leq |\alpha\leq k}\|\partial ^{|\alpha|}\varphi_{\nu}f\|^{2}_{L_{2}(B(y_{\nu}, \lambda/2))}
$$ 
and all partial derivatives are  computed in a fixed canonical coordinate system $\exp^{-1}_{y_{\nu}}    $ on $B(y_{\nu}, \lambda/2)$. 
\begin{rem}
A geodesic  coordinate system $\exp^{-1}_{y}$ depends on the choice of a  basis in the tangent space $T_{y},\>\>y\in X$. We assume that such basis is fixed and orthonormal for every $y=y_{\nu}\in X_{r}=\{y_{\nu}\}$.
\end{rem}

The Laplace-Beltrami which is given in a local coordinate system  by the formula
$$
\Delta
f=\sum_{m,k}\frac{1}{\sqrt{det(g_{ij})}}\partial_{m}\left(\sqrt{det(g_{ij})}
g^{mk}\partial_{k}f\right)
$$
where $g_{ij}$ are components of the metric tensor,$\>\>det(g_{ij})$ is the determinant of the matrix $(g_{ij})$, $\>\>g^{mk}$ components of the matrix inverse to $(g_{ij})$.
 It is known that the operator $(-\Delta)$ is a self-adjoint positive
definite operator in the corresponding space $L_{2}(X,d\mu(x)),$ where
$d\mu(x)$ is the $G$-invariant measure. The regularity Theorem for the
Laplace-Beltrami operator $\Delta$ states that domains of the
powers
 $(-\Delta)^{s/2}$ coincide with the Sobolev spaces
$H^{s}(X)$ and  the norm (\ref{Sob}) is equivalent to the
graph norm $\|f\|+\|(-\Delta)^{s/2}f\|$ (see \cite{T3}, Sec.
7.4.5.) Moreover, since the operator $\Delta$ is invertible in
$L_{2}(X)$ the Sobolev norm is also equivalent to the norm
$\|(-\Delta)^{s/2}f\|.$

\subsection{Helgason-Fourier transform on  Riemannian symmetric spaces of the noncompact type}
For every $f\in C_{0}^{\infty}(X)$ the Helgason-Fourier transform
is defined by the formula
\begin{equation}\label{H-F}
\widehat{f}(\lambda,b)=\int_{X}f(x)e^{(-i\lambda+\rho)(A(x,b))}dx,
\end{equation}
where $ \lambda\in \textbf{a}^{*}, b\in \mathcal{B}=K/X, $ and
$dx$ is a $G$-invariant measure on $X$. This integral can also be
expressed as an integral over the group $G$. Namely, if $b=uX,u\in
K$, then
\begin{equation}
\widehat{f}(\lambda,b)=\int_{G}f(gK)e^{(-i\lambda+\rho)(A(u^{-1}g))}dg.
\end{equation}
The invariant measure on $X$ can be normalized so that the
following inversion formula holds for $f\in C_{0}^{\infty}(X)$
$$
f(x)=w^{-1}\int_{\textbf{a}^{*}\times
\mathcal{B}}\widehat{f}(\lambda,b)e^{(i\lambda+\rho)(A(x,b))}|c(\lambda)|^{-2}d\lambda
db,
$$
where $w$ is the order of the Weyl's group and $c(\lambda)$ is the
Harish-Chandra's function, $d\lambda$ is the Euclidean measure on
$\textbf{a}^{*}$ and $db$ is the normalized $K$-invariant measure
on $\mathcal{B}$. This transform can be extended to an isomorphism
between the spaces $L_{2}(X,d\mu(x))$ and
$L_{2}(\textbf{a}^{*}_{+}\times \mathcal{B},
|c(\lambda)|^{-2}d\lambda db)$ and the Parseval's  formula holds
true
$$
\int_{X}f_{1}(x)\overline{f_{2}(x)}d\mu(x)=\int_{\textbf{a}^{*}_{+}\times \mathcal{B}}\widehat{f_{1}
(\lambda,b)}\widehat{f_{2}(\lambda,b)}c(\lambda)|^{-2}d\lambda db
$$
which implies the Plancherel's formula
$$
\|f\|=\left( \int_{\textbf{a}^{*}_{+}\times \mathcal{B}}|\widehat{f}
(\lambda,b)|^{2}|c(\lambda)|^{-2}d\lambda db\right)^{1/2}.
$$

Let $\Delta$ be the Laplace-Beltrami
operator of the $G$-invariant Riemannian structure on $X$.
It is known that the following formula holds
\begin{equation}
\widehat{\Delta
f}(\lambda,b)=-\left(\|\lambda\|^{2}+\|\rho\|^{2}\right)\widehat{f}(\lambda,b),
f\in C_{0}^{\infty}(X),
\end{equation}
where
$\|\lambda\|^{2}=\left<\lambda,\lambda\right>,\|\rho\|^{2}=\left<\rho,\rho\right>,
\left<\cdot,\cdot\right>$ is the Killing form.

\subsection{A Paley-Wiener Theorem on $X$}

A function $\phi(\lambda, b)$ in $C^{\infty}(\mathbf{a}^{*}_{\mathbb{C}}\times \mathcal{B})$, holomorphic in $\lambda$, is called a holomorphic
function of uniform exponential type $\sigma$, if there exists a constant
$\sigma \geq  0$, such that, for each $N\in \mathbb{N} $ one has
$$
\sup_{(\lambda, b) \in \mathbf{a}^{*}_{\mathbb{C}}\times \mathcal{B}}e^{-\sigma|\Im \lambda|}(1+|\lambda|)^{N}|\phi(\lambda, b)|<\infty.
$$
The space of all holomorphic functions of uniform exponential type $\sigma$  will be
denoted $\mathcal{H}_{\sigma}(\textbf{a}^{*}_{\mathbb{C}}\times \mathcal{B})$ and   
$$
\mathcal{H}(\textbf{a}^{*}_{\mathbb{C}}\times \mathcal{B})=\bigcup _{\sigma>0}\mathcal{H}_{\sigma}(\textbf{a}^{*}_{\mathbb{C}}\times \mathcal{B}).
$$
One also need the space $
\mathcal{H}(\textbf{a}^{*}_{\mathbb{C}}\times \mathcal{B})^{W}$ of all functions $\phi\in  
\mathcal{H}(\textbf{a}^{*}_{\mathbb{C}}\times \mathcal{B})$ that satisfy the following property
\begin{equation}\label{symmetry}
\int_{\mathcal{B}}e^{(iw\lambda+\rho)(A(x,b))}\phi(w\lambda, b)db=\int_{\mathcal{B}}e^{(i\lambda+\rho)(A(x,b))}\phi(\lambda, b)db,
\end{equation}
for all $w\in W$ and all $\lambda\in \textbf{a}^{*}_{\mathbb{C}}, \>x\in X$.
 
The following  analog of the Paley-Wiener Theorem is known.
\begin{thm}
The  Helgason-Fourier transform (\ref{H-F}) is a bijection of  $C_{0}^{\infty}(X)$ onto the space $
\mathcal{H}(\textbf{a}^{*}_{\mathbb{C}}\times \mathcal{B})^{W}$ and the inverse of this bijection can be expressed as 
\begin{equation}\label{IHF}
f(x)=\int_{\textbf{a}^{*}_{+}\times
\mathcal{B}}\hat{f}(\lambda,b)e^{(i\lambda+\rho)(A(x,b))}|c(\lambda)|^{-2}d\lambda
db.
\end{equation}
In particular, $\widehat{f}$ belongs to the space $
\mathcal{H}_{\sigma}(\textbf{a}^{*}_{\mathbb{C}}\times \mathcal{B})^{W}$ if and only if the support of $f$ is in the ball $B_{\sigma}$. Here $B_{\sigma}$ is the ball in invariant metric on $X$ whose radius is $\sigma$ and center is $eK$.
\end{thm}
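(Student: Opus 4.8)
The plan is to establish the two halves of the bijection separately, treating injectivity as essentially automatic and concentrating on the range characterization together with the support/exponential-type correspondence. Injectivity of $\mathcal{F}$ on $C_0^\infty(X)$ is immediate from the Plancherel identity recorded above, since $\mathcal{F}$ extends to an isometry of $L_2(X)$ onto $L_2(\mathbf{a}^*_+\times\mathcal{B},|c(\lambda)|^{-2}d\lambda\,db)$; thus the real content is to identify the image of $C_0^\infty(X)$ with $\mathcal{H}(\mathbf{a}^*_{\mathbb{C}}\times\mathcal{B})^W$ and to match the radius of the support with the exponential type.

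For the necessity direction I would take $f\in C_0^\infty(X)$ with $\operatorname{supp}f\subset B_\sigma$ and analytically continue $\widehat{f}(\lambda,b)$ in $\lambda$ directly from the defining integral (\ref{H-F}); holomorphy in $\lambda\in\mathbf{a}^*_{\mathbb{C}}$ follows by differentiating under the integral sign, which is justified because $f$ is compactly supported. The exponential type is controlled by the pointwise bound $|e^{(-i\lambda+\rho)(A(x,b))}|\le e^{(|\Im\lambda|+\|\rho\|)\,\|A(x,b)\|}$ together with the geometric estimate $\|A(x,b)\|\le |x|$, which follows from the bi-$K$-invariance of the norm (\ref{X-norm}) and the contraction property of the Iwasawa projection; on $\operatorname{supp}f$ this gives $|x|\le\sigma$, hence exactly exponential type $\sigma$ after absorbing the constant $e^{\sigma\|\rho\|}$. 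The rapid-decrease factor $(1+|\lambda|)^N$ is obtained from the eigenvalue relation $\widehat{\Delta^m f}=(-1)^m(\|\lambda\|^2+\|\rho\|^2)^m\widehat{f}$: since each $\Delta^m f$ is again smooth and supported in $B_\sigma$, applying the exponential-type bound to $\Delta^m f$ yields $|\widehat{f}(\lambda,b)|\le C_m(1+|\lambda|)^{-2m}e^{\sigma|\Im\lambda|}$. Finally the symmetry condition (\ref{symmetry}) holds for $\widehat{f}$ because the inversion formula must reconstruct the single-valued function $f$ independently of the chamber representative; equivalently it encodes the $W$-invariance of the spherical kernel $\int_{\mathcal{B}}e^{(i\lambda+\rho)(A(x,b))}db$ under $\lambda\mapsto w\lambda$.

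For the sufficiency direction I would start from $\phi\in\mathcal{H}_\sigma(\mathbf{a}^*_{\mathbb{C}}\times\mathcal{B})^W$ and define $f$ by the inversion formula (\ref{IHF}). The symmetry condition (\ref{symmetry}) is precisely what makes this integral well defined and lets me rewrite it as a $W$-symmetrized integral over all of $\mathbf{a}^*$; smoothness of $f$ then follows by differentiation under the integral, justified by the rapid decrease of $\phi$ against the polynomially bounded measure $|c(\lambda)|^{-2}d\lambda$. The crucial and hardest point is the support statement: for a point $x$ with $|x|>\sigma$ I must show $f(x)=0$. The idea is a contour-deformation argument — shift the real contour $\mathbf{a}^*$ into $\mathbf{a}^*+i\eta$, apply Cauchy's theorem using holomorphy of both $\phi$ and the kernel $e^{(i\lambda+\rho)(A(x,b))}$ in $\lambda$, and let $\eta$ run to infinity in the direction dictated by the geometry of $A(x,b)$. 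Along the shifted contour the kernel acquires a decay $e^{-\eta(A(x,b))}$ while $\phi$ grows at most like $e^{\sigma|\eta|}$, so when $|x|>\sigma$ the net exponent is negative and the integral vanishes in the limit.

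The main obstacle is carrying out this deformation in the presence of the factor $|c(\lambda)|^{-2}$, which is neither holomorphic nor free of singularities. To handle it I would use the $W$-symmetry to replace the integral over the chamber $\mathbf{a}^*_+$ weighted by $|c(\lambda)|^{-2}$ with an integral over all of $\mathbf{a}^*$ weighted by $c(-\lambda)^{-1}$, exploiting the relation $c(-\lambda)=\overline{c(\lambda)}$ on the real locus; the reciprocal $c(\lambda)^{-1}$ is holomorphic and of polynomial growth in a tube around $\mathbf{a}^*$ by the Harish-Chandra estimates, so the shifted contour meets no poles and the growth of $\phi$ dominates. An alternative, technically cleaner route is to first integrate out $\mathcal{B}$ and reduce the whole statement to Helgason's spherical Paley-Wiener theorem, for which the contour estimates are already available. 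Either way, once the support localization is established, combining it with the necessity direction and injectivity yields the asserted bijection and the equivalence between membership in $\mathcal{H}_\sigma(\mathbf{a}^*_{\mathbb{C}}\times\mathcal{B})^W$ and support in $B_\sigma$.
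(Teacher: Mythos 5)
The paper contains no proof of this theorem: it is quoted as a known result (Helgason's Paley--Wiener theorem for the Helgason--Fourier transform, see \cite{H2}; cf.\ the refinements in \cite{A}, \cite{Pa}), so there is no internal argument to compare yours against; your sketch has to be judged as a reconstruction of the classical proof, and as such it has two concrete gaps. In the necessity direction, once $\lambda$ is complex the eigenvalue relation reads $\widehat{\Delta^m f}=(-1)^m\left(\left<\lambda,\lambda\right>+\|\rho\|^{2}\right)^m\widehat{f}$ with the holomorphic bilinear extension $\left<\lambda,\lambda\right>$, not $\|\lambda\|^{2}$, and the divisor $\left<\lambda,\lambda\right>+\|\rho\|^{2}$ vanishes on a complex variety containing points of arbitrarily large norm (e.g.\ $\lambda=\xi+i\eta$ with $\xi\perp\eta$ and $\|\eta\|^{2}-\|\xi\|^{2}=\|\rho\|^{2}$). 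So dividing does \emph{not} directly yield $|\widehat{f}(\lambda,b)|\leq C_{m}(1+|\lambda|)^{-2m}e^{\sigma|\Im \lambda|}$ uniformly on $\textbf{a}^{*}_{\mathbb{C}}\times\mathcal{B}$, which is what membership in $\mathcal{H}_{\sigma}$ requires; one has to split into the region $\|\Re\lambda\|\geq c\|\Im\lambda\|$, where the divisor is comparable to $|\lambda|^{2}$, and its complement, where a Phragm\'en--Lindel\"of (maximum-principle) patch restores the polynomial factor, as in Helgason's treatment. Your justification of the symmetry condition (``the inversion formula must reconstruct $f$'') is likewise a heuristic, not an argument, though this is a minor point.

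The more serious defect is in your primary route for the support theorem. After shifting the contour to $\textbf{a}^{*}+i\eta$, the kernel contributes $e^{-\eta(A(x,b))+\rho(A(x,b))}$, and for fixed $x$ the vector $A(x,b)$ sweeps, as $b$ varies over $\mathcal{B}$, essentially the convex hull of the Weyl orbit of the Cartan component of $x$ (already in rank one, $A(x,b)$ ranges over $[-|x|,|x|]$). Hence $\eta(A(x,b))$ changes sign with $b$, and no choice of direction for $\eta\to\infty$ makes the ``net exponent'' negative uniformly in $b$: a single contour deformation of the full $(\lambda,b)$ integral cannot force $f(x)=0$ for $|x|>\sigma$. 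This is precisely why the classical proof does not proceed that way; what you relegate to an ``alternative, technically cleaner route'' --- passing to $K$-averages $x\mapsto\int_{K}f(gkx)\,dk$, whose spherical transforms inherit the type-$\sigma$ bounds, and invoking the spherical Paley--Wiener theorem, where the $W$-symmetrization replacing $|c(\lambda)|^{-2}$ by $c(-\lambda)^{-1}$ (holomorphic, of polynomial growth on the relevant tube, with $c(-\lambda)=\overline{c(\lambda)}$ for real $\lambda$) makes the shift against the \emph{fixed} exponential $e^{(i\lambda-\rho)(H)}$ legitimate --- is in fact the standard argument and must be your main line, not a fallback. With that substitution, and the Phragm\'en--Lindel\"of repair above, your outline matches the known proof.
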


\section{Paley-Wiener spaces $PW_{\omega}(X)$}\label{PW-100}

\begin{defn}
We will say that $f\in L_{2}(X,d\mu(x))$ belongs to the class
$PW_{\omega}(X)$ if its Helgason-Fourier transform $\widehat{f}\in L_{2}(\textbf{a}^{*}_{+}\times
\mathcal{B})$ has compact
support in the sense that $\widehat{f}(\lambda,b)=0$ a. e. for
$\|\lambda\|>\omega$. Such functions will  be also called
$\omega$-band limited.
\end{defn}

Using the spectral resolution of identity $P_{\lambda}$ we define
the unitary group of operators by the formula
$$
e^{it\Delta}f=\int_{0}^{\infty}e^{it\tau}dP_{\tau}f,\>\>\> f\in
L_{2}(X), \>\>\>t\in \mathbb{R}.
$$
Let us introduce the operator
\begin{equation}
\textbf{R}_{\Delta}^{\sigma}f=\frac{\sigma}{\pi^{2}}\sum_{k\in\mathbb{Z}}\frac{(-1)^{k-1}}{(k-1/2)^{2}}
e^{i\left(\frac{\pi}{\sigma}(k-1/2)\right)\Delta}f,\>\>\> f\in L_{2}(X),\>\>\>
\sigma>0.\label{Riesz1}
\end{equation}
 Since  $\left\|e^{it\Delta}f\right\|=\|f\| $ and
\begin{equation}
\frac{\sigma}{\pi^{2}}\sum_{k\in\mathbb{Z}}\frac{1}{(k-1/2)^{2}}=\sigma,\label{id}
 \end{equation}
 the series in (\ref{Riesz1}) is convergent and it shows that
 $\textbf{R}_{\Delta}^{\sigma}$ is a bounded operator in $L_{2}(X)$
 with the norm $\sigma$:
\begin{equation}
 \|\textbf{R}_{\Delta}^{\sigma}f\|\leq \sigma\|f\|, f\in
L_{2}(X).\label{Riesznorm}
 \end{equation}

 The next theorem contains generalizations of several results
 from the classical harmonic analysis (in particular  the Paley-Wiener theorem)
  and it  follows essentially
from our more general results in \cite{Pes00}-
\cite{Pes12}(see also \cite{A}, \cite{Pa}).
\begin{thm} Let $f\in L_2(X)$. Then the following
statements are equivalent:
\begin{enumerate}
\item $f\in PW_{\omega}(X)$;
 \item $f\in
C^{\infty}(X)=\bigcap_{k=1}^{\infty}H^{k}(X), $ and for all $s\in
\mathbb{R}_{+}$ the following Bernstein inequality holds:
\begin{equation}
\|\Delta^{s}f\|\leq( \omega^{2}+\|\rho\|^{2})^{s}\|f\|;\label{B}
\end{equation}

\item $f\in C^{\infty}(X)$ and the
following Riesz interpolation formula holds
\begin{equation}
\left(i \Delta\right)^{n}f=\left(\textbf{R}_{\Delta}^{
\omega^{2}+\|\rho\|^{2}}\right)^{n}f, n\in \mathbb{N};
\label{Rieszn}
\end{equation}

\item  For every $g\in
L_{2}(X)$ the function $t\mapsto
\left<e^{it\Delta}f,g\right>, t\in \mathbb{R}^{1}$,
 is bounded on the real line and has an extension to the complex
plane as an entire function of the exponential type
$\omega^{2}+\|\rho\|^{2}$;

\item The abstract-valued
function $t\mapsto e^{it\Delta}f$  is bounded on the real line and has an
extension to the complex plane as an entire function of the
exponential type $\omega^{2}+\|\rho\|^{2}$;

\item  
A function $f\in L_{2}(X)$ belongs to the space $PW_{\omega_{f}}(X),
0<\omega_{f}<\infty,$ if and only if $f$ belongs to the set
$C^{\infty}(X)$, the limit
$$
 \lim_{k\rightarrow \infty}\|\Delta^{k}f\|^{1/k}
$$
exists and
\begin{equation}
\lim_{k\rightarrow
\infty}\|\Delta^{k}f\|^{1/k}=\omega_{f}^{2}+\|\rho\|^{2}.\label{limitcond}
\end{equation}

\item a function$f\in L_{2}(X)$ belongs to $PW_{\omega}(X)$ if and
only if $f\in C^{\infty}(X)$ and  the upper bound
\begin{equation}
\sup _{k\in \mathbb{N} }\left((\omega
^{2}+\|\rho\|^{2})^{-k}\|\Delta^{k}f\|\right)<\infty
\end{equation}
is finite,

\item a function $f\in L_{2}(X)$ belongs to $PW_{\omega}(X)$ if
and only if $f\in C^{\infty}(X)$ and
\begin{equation}
\underline{\lim}_{k\rightarrow\infty}\|\Delta^{k}f\|^{1/k}=\omega^{2}+\|\rho\|^{2}<\infty.
\end{equation}
In this case $\omega=\omega_{f}$.
\item The solution $u(t),
t\in \mathbb{R}^{1}$, of the Cauchy problem
$$
i\frac{\partial u(t)}{\partial t}=\Delta u(t), u(0)=f,
i=\sqrt{-1},
$$
has a holomorphic extension $u(z)$ to the
complex plane $\mathbb{C}$ satisfying
$$
\|u(z)\|_{L_{2}(X)}\leq e^{(\omega^{2}+\|\rho\|^{2})|\Im
z|}\|f\|_{L_{2}(X)}.
$$
\end{enumerate}
\label{PW}

\end{thm}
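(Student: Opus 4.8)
The plan is to reduce every one of the nine assertions to a single statement about the spectral measure of $f$ with respect to the positive self-adjoint operator $-\Delta$, and then to invoke three classical facts: the convergence of $L^{2k}$-norms to the $L^{\infty}$-norm, the Paley--Wiener theorem, and the Riesz interpolation formula. Writing $\Omega=\omega^{2}+\|\rho\|^{2}$ and using the spectral resolution $P_{\tau}$ of $-\Delta$, I set $\mu_{f}(d\tau)=d\left<P_{\tau}f,f\right>$, a finite measure on $[0,\infty)$ of total mass $\|f\|^{2}$. Since the Helgason--Fourier transform diagonalizes $\Delta$ with symbol $-(\|\lambda\|^{2}+\|\rho\|^{2})$, the defining condition $\widehat{f}(\lambda,b)=0$ for $\|\lambda\|>\omega$ is equivalent to $\operatorname{supp}\mu_{f}\subseteq[0,\Omega]$; this is the reformulation of (1) that I would use throughout. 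I would also record the spectral representations $\|(-\Delta)^{s}f\|^{2}=\int_{0}^{\infty}\tau^{2s}\,d\mu_{f}(\tau)$ and $e^{it\Delta}f=\int_{0}^{\infty}e^{-it\tau}\,dP_{\tau}f$.

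With this dictionary the spectral block (1)$\Leftrightarrow$(2)$\Leftrightarrow$(6)$\Leftrightarrow$(7)$\Leftrightarrow$(8) is essentially immediate. If $\operatorname{supp}\mu_{f}\subseteq[0,\Omega]$ then $\|(-\Delta)^{s}f\|^{2}=\int_{0}^{\Omega}\tau^{2s}\,d\mu_{f}\le\Omega^{2s}\|f\|^{2}$, which is (2); conversely, were $\mu_{f}$ to charge $(\Omega,\infty)$, the quantity $\|(-\Delta)^{k}f\|^{1/k}$ would exceed $\Omega$ in the limit, contradicting (2). For (6)--(8) I would use the standard fact that $\|(-\Delta)^{k}f\|^{1/k}=\big(\int\tau^{2k}\,d\mu_{f}\big)^{1/2k}$ increases to the essential supremum of $\tau$ on $\operatorname{supp}\mu_{f}$; hence this limit always exists, equals the top of the spectral support, and is finite and equal to $\Omega$ precisely when $f\in PW_{\omega}(X)$ with $\omega=\omega_{f}$. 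The inequality and $\liminf$ versions (7),(8) follow from the same monotonicity.

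For the entire-function block (4)$\Leftrightarrow$(5)$\Leftrightarrow$(9) I would complexify the semigroup. If $\operatorname{supp}\mu_{f}\subseteq[0,\Omega]$, then $e^{iz\Delta}f=\int_{0}^{\Omega}e^{-iz\tau}\,dP_{\tau}f$ converges in $L_{2}(X)$ for every $z\in\mathbb{C}$, defines an entire $L_{2}(X)$-valued function, and satisfies $\|e^{iz\Delta}f\|^{2}=\int_{0}^{\Omega}e^{2\tau\Im z}\,d\mu_{f}\le e^{2\Omega|\Im z|}\|f\|^{2}$, giving (5), hence (4) by pairing with $g$, and (9) since $u(t)=e^{-it\Delta}f$ solves the stated Cauchy problem with the same bound. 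For the converses I would apply the scalar Paley--Wiener theorem to $t\mapsto\left<e^{it\Delta}f,g\right>=\int_{0}^{\infty}e^{-it\tau}\,d\left<P_{\tau}f,g\right>$: boundedness on $\mathbb{R}$ together with exponential type $\Omega$ forces the (a priori $[0,\infty)$-supported) measure $d\left<P_{\tau}f,g\right>$ to be supported in $[0,\Omega]$, and letting $g$ range over a dense set forces $\operatorname{supp}\mu_{f}\subseteq[0,\Omega]$, i.e. (1).

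The remaining statement (3) carries the genuine classical content, and I expect it to be the main obstacle. Having established (5), the map $u(t)=e^{it\Delta}f$ is an $L_{2}(X)$-valued entire function of exponential type $\Omega$, bounded by $\|f\|$ on $\mathbb{R}$. The classical Riesz interpolation formula, applied to the scalar functions $\left<u(t),g\right>$ and then lifted by density, expresses the derivative $u'(0)=i\Delta f$ as the norm-convergent series $\tfrac{\Omega}{\pi^{2}}\sum_{k}\tfrac{(-1)^{k-1}}{(k-1/2)^{2}}\,u\big(\tfrac{\pi}{\Omega}(k-1/2)\big)=\textbf{R}_{\Delta}^{\Omega}f$; iterating gives $(i\Delta)^{n}f=(\textbf{R}_{\Delta}^{\Omega})^{n}f$, which is (3). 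The delicate points are the justification of term-by-term differentiation and of the vector-valued interpolation identity, for which one must know a priori that $u$ has exponential type $\Omega$, so that (3) genuinely rests on (5). The converse (3)$\Rightarrow$(1) is easy: from $\|\textbf{R}_{\Delta}^{\sigma}\|\le\sigma$ (inequality (\ref{Riesznorm})) one gets $\|(i\Delta)^{n}f\|=\|(\textbf{R}_{\Delta}^{\Omega})^{n}f\|\le\Omega^{n}\|f\|$, i.e. the integer-order Bernstein inequality, and the spectral argument of the second paragraph upgrades this to $\operatorname{supp}\mu_{f}\subseteq[0,\Omega]$.
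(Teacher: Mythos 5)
Your proposal is correct and follows essentially the same route the paper relies on: the paper gives no self-contained proof of Theorem \ref{PW}, deferring to the author's earlier spectral-theoretic results on Paley--Wiener vectors of self-adjoint operators, and your reduction of all nine statements to the condition that the spectral measure $d\left<P_{\tau}f,f\right>$ of $-\Delta$ is supported in $[0,\omega^{2}+\|\rho\|^{2}]$ --- combined with the $L^{2k}\to L^{\infty}$ norm limit for items (6)--(8), the scalar Paley--Wiener--Schwartz theorem applied to $t\mapsto\left<e^{it\Delta}f,g\right>$ for items (4), (5), (9), and the Riesz interpolation formula with the norm bound $\|\textbf{R}_{\Delta}^{\sigma}f\|\leq\sigma\|f\|$ of (\ref{Riesznorm}) for item (3) --- is exactly that framework, which the paper's own setup (the group $e^{it\Delta}$ and the operator (\ref{Riesz1})) anticipates. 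The only steps needing routine care are the ones you already flag: term-by-term differentiation and the lifting of the scalar Riesz formula to the vector-valued function $u(t)=e^{it\Delta}f$ via pairing with a dense set of $g$.
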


Now we are going to prove the following density  result which shows that for every $\omega>0$ the subspace  $PW_{\omega}(X)$  contains "many" functions.
\begin{thm}
For every $\omega>0$ and every open set $V\subset X$ if a function $f\in C_{0}^{\infty}(V)$ is orthogonal to all functions in  $PW_{\omega}(X)$ then $f$ is zero.
\end{thm}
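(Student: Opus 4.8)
The plan is to convert the orthogonality hypothesis into a vanishing statement for the Helgason--Fourier transform $\widehat f$ and then use the real-analyticity of $\widehat f$ coming from the Paley--Wiener Theorem. First I would recall that, under the Plancherel isomorphism $f\mapsto\widehat f$ of $L_{2}(X)$ onto $L_{2}(\textbf{a}^{*}_{+}\times\mathcal{B};|c(\lambda)|^{-2}d\lambda\,db)$, the space $PW_{\omega}(X)$ corresponds precisely to those transforms supported in $\Pi_{\omega}$. Thus the assumption $\langle f,g\rangle=0$ for every $g\in PW_{\omega}(X)$ translates, via Parseval's formula, into
$$
\int_{\Pi_{\omega}\cap(\textbf{a}^{*}_{+}\times\mathcal{B})}\widehat f(\lambda,b)\overline{\widehat g(\lambda,b)}\,|c(\lambda)|^{-2}\,d\lambda\,db=0,
$$
where $\widehat g$ runs over all of $L_{2}(\Pi_{\omega};|c(\lambda)|^{-2}d\lambda\,db)$. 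Since $|c(\lambda)|^{-2}>0$ almost everywhere, this forces $\widehat f(\lambda,b)=0$ for almost every $(\lambda,b)$ with $\lambda\in\textbf{a}^{*}_{+}$ and $\|\lambda\|\le\omega$.

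Next I would use that $f\in C_{0}^{\infty}(V)\subset C_{0}^{\infty}(X)$, so by the Paley--Wiener Theorem the transform $\widehat f$ lies in $\mathcal{H}(\textbf{a}^{*}_{\mathbb{C}}\times\mathcal{B})^{W}$; in particular, for each fixed $b\in\mathcal{B}$ the function $\lambda\mapsto\widehat f(\lambda,b)$ is holomorphic on $\textbf{a}^{*}_{\mathbb{C}}$, hence real-analytic on $\textbf{a}^{*}$. The vanishing established above holds on $\textbf{a}^{*}_{+}\cap\{\|\lambda\|<\omega\}$, which is a nonempty open subset of $\textbf{a}^{*}\cong\mathbb{R}^{n}$ (the positive Weyl chamber is a nonempty open cone, and for $\omega>0$ it meets the ball). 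A holomorphic function vanishing on a nonempty open piece of the totally real form $\textbf{a}^{*}$ must vanish identically, so $\widehat f(\cdot,b)\equiv0$ for every $b$. Consequently $\widehat f\equiv0$ on $\textbf{a}^{*}_{+}\times\mathcal{B}$, and the injectivity of the Helgason--Fourier transform (equivalently, the inversion/Plancherel formula) yields $f=0$.

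The only genuinely delicate point is this analytic-continuation step: the set on which $\widehat f$ vanishes is open in the real form $\textbf{a}^{*}$ but not in the complex domain $\textbf{a}^{*}_{\mathbb{C}}$, so the naive several-variable identity theorem does not apply verbatim. I would handle it by fixing a point $\lambda_{0}$ in the real open set, expanding $\widehat f(\cdot,b)$ in its convergent Taylor series about $\lambda_{0}$, and noting that the restriction of that series to the real neighborhood is exactly the (identically vanishing) real-analytic expansion, whence every Taylor coefficient vanishes; this gives $\widehat f(\cdot,b)\equiv0$ near $\lambda_{0}$ and then on all of $\textbf{a}^{*}_{\mathbb{C}}$ by connectedness. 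Everything else---identifying $PW_{\omega}(X)$ with its Plancherel image, the almost-everywhere positivity of $|c(\lambda)|^{-2}$, and the injectivity of $\mathcal{F}$---is routine given the harmonic-analytic background assembled above.
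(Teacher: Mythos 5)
Your proposal is correct and follows essentially the same route as the paper: Parseval's formula converts orthogonality to $PW_{\omega}(X)$ into a.e.\ vanishing of $\widehat f$ on $\Pi_{\omega}$, and the holomorphy of $\widehat f$ in $\lambda$ from the Paley--Wiener Theorem then forces $\widehat f\equiv 0$, hence $f=0$. The only difference is that you spell out the identity-theorem step for vanishing on a real open subset of $\textbf{a}^{*}\subset\textbf{a}^{*}_{\mathbb{C}}$, a point the paper leaves implicit, and your treatment of it is correct.
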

\begin{proof}
 Assume that $f\in
C_{0}^{\infty}(V)$ is a such function and extend it by zero outside of $V$. By the Paley-Wiener Theorem and Parseval's formula 
the  transform $\widehat{f}(\lambda, b)$ is in $C^{\infty}(\textbf{a}^{*}_{+}\times
\mathcal{B})$ and  holomorphic in $\lambda$ and at the same time should be orthogonal
to all functions in $L_{2}\left(\Pi(0,\omega)\times B;
|c(\lambda)|^{-2}d\lambda db\right)$, where
$
\Pi(0,\omega)=\{\lambda \in \textbf{a}^{*}:
\|\lambda\|\leq \omega\}.
$
It implies that $\widehat{f}$ is zero.  The theorem is proved.
\end{proof}

\subsection{On decay of Paley-Wiener functions}

In this section we closely follow Andersen \cite{A}. 
Let us introduce the following spherical function 
$$
\varphi_{0}(g)=\int_{K}e^{\rho A(k^{-1}g)}dk=\int_{K}e^{\rho H(g^{-1}k)}dk.
$$

\begin{defn}\label{Ss}
The $L_{2}$-Schwartz space $S^{2}(X)$ is introduced as the space of all $f\in C^{\infty}(X)$ such that
$$
\sup_{x\in X}(1+|x|)^{N}\varphi_{0}(x)^{-1}\left|Df(x)\right|\leq \infty,\>\>N\in \mathbb{N}\cup{0},
$$
for all $D\in U(\textbf{g})$, where $U(\textbf{g})$ is the universal enveloping algebra of $\textbf{g}$. Here $|x|=|g|$, for $x=gK\in X$ where $|\cdot|$ is defined in (\ref{X-norm}).
\end{defn}

The space $S^{2}(X)$ can also be characterized as the space of all functions for which 
$$
(1+|g|)^{N}Df(g)\in L_{2}(X),
\>\>N\in \mathbb{N}\cup{0},
$$
for all $D\in U(\textbf{g})$, where $|x|=|g|$, for $x=gK\in X$.

Let $C_{0}^{\infty}(\textbf{a}^{*}\times \mathcal{B})^{W}$ be a subspace of functions in $C_{0}^{\infty}(\textbf{a}^{*}\times \mathcal{B})$ that satisfy the symmetry condition (\ref{symmetry}) for all $w\in W, \>\>\lambda\in \textbf{a}^{*}, \>\>x\in X$. 

The following fact is proved by M. Eguchi in \cite{Egu}, Theorem 4.1.1.
\begin{thm}\label{E}
The inverse Helgason-Fourier transform (\ref{IHF}) maps  $C_{0}^{\infty}(\textbf{a}^{*}\times \mathcal{B})^{W}$ into Schwartz space $S^{2}(X)$.
\end{thm}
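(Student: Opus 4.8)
The plan is to prove Eguchi's theorem (Theorem~\ref{E}) by establishing the Schwartz-type bounds in Definition~\ref{Ss} directly from the inverse Helgason-Fourier representation (\ref{IHF}). Given $\phi\in C_{0}^{\infty}(\textbf{a}^{*}\times\mathcal{B})^{W}$, I set $f(x)=\int_{\textbf{a}^{*}_{+}\times\mathcal{B}}\phi(\lambda,b)e^{(i\lambda+\rho)(A(x,b))}|c(\lambda)|^{-2}\,d\lambda\,db$ and must show that $(1+|x|)^{N}\varphi_{0}(x)^{-1}|Df(x)|$ is bounded for every $N$ and every $D\in U(\textbf{g})$. The first step is to understand how an invariant differential operator $D$ acts under the transform: each $D$ multiplies the integrand by a polynomial in $\lambda$ (this is the standard fact that the Helgason-Fourier transform intertwines $D$ with multiplication by a $W$-invariant polynomial $p_{D}(\lambda)$). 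Since $\phi$ has compact support in $\lambda$, the function $p_{D}(\lambda)\phi(\lambda,b)|c(\lambda)|^{-2}$ remains smooth and compactly supported away from any singularity issues of $|c(\lambda)|^{-2}$; I would need to note that $\phi$ can be arranged (or $|c(\lambda)|^{-2}$ is polynomially controlled) so that the product is still an admissible integrand. Thus reducing to bounding $\varphi_{0}(x)^{-1}\int e^{(i\lambda+\rho)(A(x,b))}\psi(\lambda,b)\,d\lambda\,db$ for a generic compactly supported smooth symmetric $\psi$.

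The second and central step is to extract the factor $\varphi_{0}(x)^{-1}$ and the polynomial decay $(1+|x|)^{-N}$. The key analytic input is the estimate on the kernel $e^{\rho(A(x,b))}$ relative to the spherical function $\varphi_{0}$: by definition $\varphi_{0}(x)=\int_{K}e^{\rho(A(x,b))}\,db$, so $e^{\rho(A(x,b))}$ is comparable to $\varphi_{0}(x)$ after integration in $b$, and one has pointwise the bound $|e^{(i\lambda+\rho)(A(x,b))}|=e^{\rho(A(x,b))}$. The plan is to write the $\lambda$-integral as an oscillatory integral in $\lambda$ against the smooth compactly supported amplitude $\psi(\cdot,b)$, integrate by parts in $\lambda$ repeatedly using the factor $e^{-i\lambda(A(x,b))}$, and thereby gain arbitrary polynomial decay in $|A(x,b)|$. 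Since $|A(x,b)|$ is comparable to $|x|$ uniformly in $b$ (a standard geometric estimate on symmetric spaces), each integration by parts produces a factor of order $(1+|x|)^{-1}$, yielding the required $(1+|x|)^{-N}$ after $N$ steps, while the remaining amplitude still carries the factor $e^{\rho(A(x,b))}$ whose $b$-integral reconstructs $\varphi_{0}(x)$.

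Combining these, after $N$ integrations by parts in $\lambda$ I would obtain
\begin{equation}
|Df(x)|\leq C_{N,D}\,(1+|x|)^{-N}\int_{\mathcal{B}}e^{\rho(A(x,b))}\,db=C_{N,D}\,(1+|x|)^{-N}\varphi_{0}(x),
\end{equation}
which is exactly the bound $(1+|x|)^{N}\varphi_{0}(x)^{-1}|Df(x)|\leq C_{N,D}$ demanded by Definition~\ref{Ss}. The uniformity of the constant in $x$ follows from the compact support of $\psi$ in $\lambda$ and the smoothness of the amplitude, together with uniform control of the derivatives of $A(x,b)$ in $\lambda$.

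The main obstacle I expect is twofold. First, making the integration-by-parts rigorous requires a clean, uniform comparison $|A(x,b)|\gtrsim |x|$ for all $b\in\mathcal{B}$, which is delicate precisely where geodesics through the origin meet the boundary data encoded by $b$; one must verify that the phase gradient $\nabla_{\lambda}\big(\lambda(A(x,b))\big)=A(x,b)$ does not degenerate as $|x|\to\infty$, and handle the region of small $|x|$ separately (there the estimate is trivial since everything is locally bounded). Second, the weight $|c(\lambda)|^{-2}$ is only smooth on the open chamber and can blow up polynomially; incorporating it into the amplitude $\psi$ while preserving smoothness across the walls is where the symmetry condition (\ref{symmetry}) and $W$-invariance become essential, since they guarantee that the a priori singular integrand assembles into the genuinely smooth symmetric kernel appearing in the inversion formula. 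Managing this interplay between the Harish-Chandra $c$-function and the Weyl symmetry is the technical heart of the argument.
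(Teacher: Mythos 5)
The paper itself contains no proof of this statement --- it is quoted directly from Eguchi (\cite{Egu}, Theorem 4.1.1) --- so your proposal has to be judged against the known proofs in the literature, and measured that way its central step fails. The fatal point is exactly the one you flagged as an ``obstacle'' and hoped to verify: the comparison $|A(x,b)|\geq c\,|x|$ uniformly in $b$ is \emph{false}. One always has $|A(x,b)|\leq |x|$, but for fixed $x$ with $|x|=t$ large, $A(x,b)$ sweeps essentially the whole interval $[-t,t]$ as $b$ varies (in the hyperbolic plane $A(x,b)$ is a Busemann function, and $e^{2\rho(A(x,b))}$ is the Poisson kernel), so the phase gradient $\nabla_{\lambda}\bigl(\lambda(A(x,b))\bigr)=A(x,b)$ degenerates on a set of $b$'s of definite measure for \emph{every} large $x$. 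Worse, the degeneration is not a removable technicality: it caps the decay your method can produce. In $\mathbb{H}^{2}$ the set $\{b:\,A(x,b)\approx s\}$ has measure comparable to $e^{-(t+s)/2}\,ds$ for $|s|\leq t$, so after $N$ integrations by parts the best pointwise bound $C_{N}(1+|A(x,b)|)^{-N}e^{\rho(A(x,b))}$ integrates to
\begin{equation*}
\int_{\mathcal{B}}\bigl(1+|A(x,b)|\bigr)^{-N}e^{\rho(A(x,b))}\,db \;\asymp\; e^{-t/2}\int_{-t}^{t}(1+|s|)^{-N}\,ds\;\asymp\; \frac{\varphi_{0}(x)}{1+|x|},
\end{equation*}
since $\varphi_{0}(x)\asymp(1+t)e^{-t/2}$. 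That is: no matter how many times you integrate by parts, your scheme yields at most \emph{one} power of $(1+|x|)^{-1}$ beyond $\varphi_{0}$, whereas Definition \ref{Ss} demands arbitrary $N$. The claimed final inequality $|Df(x)|\leq C_{N,D}(1+|x|)^{-N}\varphi_{0}(x)$ therefore does not follow from the argument sketched.

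The proofs that actually work (Eguchi's, whose very title is ``Asymptotic expansions of Eisenstein integrals,'' and the Harish-Chandra/Trombi--Varadarajan wave-packet theory) proceed differently: one integrates over $b$ first, uses the Harish-Chandra asymptotic expansion of the resulting Eisenstein-type integrals along the positive Weyl chamber, extracts the terms $c(w\lambda)e^{(iw\lambda-\rho)(\log a)}$ whose phase $\lambda(\log a)$ is \emph{non-degenerate} in the radial variable, and only then integrates by parts in $\lambda$; the technical heart is controlling the expansion coefficients and the $c$-function near the walls, with the $W$-symmetry (\ref{symmetry}) used to assemble the $w$-terms. Two smaller remarks: your first reduction --- that every $D\in U(\textbf{g})$ becomes multiplication by a polynomial $p_{D}(\lambda)$ --- is valid only for the $G$-invariant operators $\mathbb{D}(G/K)$, while Definition \ref{Ss} quantifies over all of $U(\textbf{g})$; for general $D$ one must instead use Harish-Chandra's derivative estimates of the kernel $e^{(i\lambda+\rho)(A(x,b))}$, which do not reduce to polynomial multipliers. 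Conversely, your worry about $|c(\lambda)|^{-2}$ is harmless: the Plancherel density is smooth on $\textbf{a}^{*}$ with polynomially bounded derivatives, so the amplitude is fine --- the genuine difficulty is the degenerate phase, and it sinks the proposal as written.
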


In order to give a more detailed statement we will introduce several notations.
\begin{defn}
The space $PWS_{\omega}(X)$ is defined as the set of all functions $f$ in $PW_{\omega}(X)$ such that for all natural $m, n$
$$
(1+|x|)^{m}\Delta^{n}f(x)\in L_{2}(X),
$$
where $|x|=|g|$, for $x=gK\in X$.
The space $PWS(X)$ is defined as the union $\bigcup_{\omega>0}PWS_{\omega}(X).$
\end{defn}
We will also need the function subspace 
$
C_{\omega}^{\infty}(\textbf{a}^{*}\times \mathcal{B})$ which is defined as the set of all functions $f$ in  $  C_{0}^{\infty}(\textbf{a}^{*}\times \mathcal{B}) $ for which 
$$
\sup_{(\lambda, b)\in supp \widehat{f}}\|\lambda\|=\omega.
$$
The space $
C_{\omega}^{\infty}(\textbf{a}^{*}\times \mathcal{B})^{W}$ is a subspace of functions in $
C_{\omega}^{\infty}(\textbf{a}^{*}\times \mathcal{B})$ that satisfy symmetry condition (\ref{symmetry})  for all $w\in W, \>\>\lambda\in \textbf{a}^{*}, \>\>x\in X$. 

The following theorem was proved by N. B. Andersen in \cite{A}, Theorem 5.7.
\begin{thm}\label{A}
The  inverse Helgason-Fourier transform is a bijection of  $C_{0}^{\infty}(\textbf{a}^{*}\times \mathcal{B})^{W}$ onto $PWS(X)$, mapping $
C_{\omega}^{\infty}(\textbf{a}^{*}\times \mathcal{B})^{W}$ onto $PWS_{\omega}(X)$.
\end{thm}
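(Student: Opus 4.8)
The plan is to establish the two bijections by exhibiting the forward Helgason-Fourier transform $f\mapsto\widehat f$ as the two-sided inverse of the map (\ref{IHF}), and to verify in turn that (\ref{IHF}) sends $C_{\omega}^{\infty}(\textbf{a}^{*}\times\mathcal{B})^{W}$ into $PWS_{\omega}(X)$, that it is injective, and that it is onto. I would prove the refined (bandwidth-$\omega$) statement first, after which the unrefined assertion follows by taking the union over $\omega>0$ on both sides.

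For well-definedness, fix $\phi\in C_{\omega}^{\infty}(\textbf{a}^{*}\times\mathcal{B})^{W}$ and let $f$ be given by (\ref{IHF}). Because $\phi$ satisfies the symmetry condition (\ref{symmetry}), Theorem \ref{E} immediately yields $f\in S^{2}(X)$. By definition $S^{2}(X)$ demands $(1+|x|)^{N}Df\in L_{2}(X)$ for every $D\in U(\textbf{g})$ and every $N$; specializing to $D=\Delta^{n}$, which lies in $U(\textbf{g})$, gives exactly $(1+|x|)^{m}\Delta^{n}f\in L_{2}(X)$ for all $m,n$. Since moreover the support of $\phi$ lies in $\{\|\lambda\|\le\omega\}$ and realizes $\|\lambda\|=\omega$, the transform $\widehat f=\phi$ vanishes for $\|\lambda\|>\omega$, so that $f\in PW_{\omega}(X)$ with exact bandwidth $\omega$. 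Hence $f\in PWS_{\omega}(X)$.

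Injectivity is the soft part. The map (\ref{IHF}) integrates $\phi$ only over $\textbf{a}^{*}_{+}\times\mathcal{B}$, hence depends on $\phi$ only through its restriction there, and on that domain the forward transform (an $L_{2}$-isomorphism) inverts it; so if the inverse transform of $\phi$ vanishes then $\phi$ vanishes on $\textbf{a}^{*}_{+}\times\mathcal{B}$. The symmetry condition (\ref{symmetry}), together with the totality in $b$ of the exponentials $e^{(i\lambda+\rho)(A(x,b))}$ as $x$ ranges over $X$, then propagates the vanishing of $\phi$ across every Weyl chamber, so $\phi\equiv0$. Put differently, (\ref{symmetry}) is precisely the compatibility condition identifying a symmetric function on all of $\textbf{a}^{*}$ with its restriction to the fundamental domain $\textbf{a}^{*}_{+}$, and on such functions the forward transform is a genuine inverse of (\ref{IHF}).

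The surjectivity is where the work lies. Given $f\in PWS_{\omega}(X)$, the only candidate preimage is $\phi=\widehat f$, and I must check $\phi\in C_{\omega}^{\infty}(\textbf{a}^{*}\times\mathcal{B})^{W}$. First note $PWS_{\omega}(X)\subset S^{2}(X)$: for bandlimited $f$ every $D\in U(\textbf{g})$ is dominated by a power of $\Delta$ via the Bernstein inequality (\ref{B}) and elliptic regularity, so after commuting the weight $(1+|x|)^{N}$ through $D$ (which produces only lower-order terms of the same type) the bounds $(1+|x|)^{m}\Delta^{n}f\in L_{2}(X)$ upgrade to $(1+|x|)^{N}Df\in L_{2}(X)$ for all $D$. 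The symmetry condition (\ref{symmetry}) holds automatically for any Helgason-Fourier transform, and the bandlimitedness $f\in PW_{\omega}(X)$ forces $\operatorname{supp}\phi\subset\{\|\lambda\|\le\omega\}$ with supremum equal to $\omega$; the genuinely hard point is smoothness $\widehat f\in C^{\infty}(\textbf{a}^{*}\times\mathcal{B})$. I would differentiate $\widehat f(\lambda,b)=\int_{X}f(x)e^{(-i\lambda+\rho)(A(x,b))}\,dx$ under the integral sign: each $\lambda$- or $b$-derivative brings down a factor polynomially bounded in $|x|$ through $A(x,b)$, so it suffices that $\int_{X}(1+|x|)^{N}|f(x)|\,e^{\rho(A(x,b))}\,dx<\infty$ for every $N$, locally uniformly in $(\lambda,b)$. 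Here the exponential weight $e^{\rho(A(x,b))}$ is balanced against the exponential volume growth of $X$ only through the $\varphi_{0}$-normalized decay $|f(x)|\le C_{M}(1+|x|)^{-M}\varphi_{0}(x)$ of $f\in S^{2}(X)$ combined with the Harish-Chandra estimates for $\varphi_{0}$; this is exactly the converse half of Eguchi's Schwartz-space isomorphism, and I expect turning the decay of $f$ into the smoothness of $\widehat f$ — that is, controlling $e^{\rho(A(x,b))}$ against the volume element uniformly in $b$ — to be the main obstacle. Once $\phi=\widehat f$ is known to be smooth, compactly supported, and symmetric, it lies in $C_{\omega}^{\infty}(\textbf{a}^{*}\times\mathcal{B})^{W}$ and its inverse transform is $f$, which finishes the proof.
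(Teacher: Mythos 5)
The paper does not actually prove Theorem \ref{A}: it is imported verbatim from Andersen \cite{A} (his Theorem 5.7), with Theorem \ref{E} quoted from Eguchi \cite{Egu} supplying the easy inclusion. So your proposal is effectively measured against the literature it reconstructs, and its global architecture is the right shape: well-definedness via Theorem \ref{E} plus the observation that powers of $\Delta$ realize the weighted $L_{2}$ conditions, injectivity via Plancherel on $\textbf{a}^{*}_{+}\times \mathcal{B}$ together with the symmetry condition (\ref{symmetry}), surjectivity via the candidate $\phi=\widehat{f}$. Two of your ``soft'' steps, though, silently rest on nontrivial inputs: the injectivity argument uses injectivity of the Poisson-type transform $\psi\mapsto\int_{\mathcal{B}}e^{(i\lambda+\rho)(A(x,b))}\psi(b)\,db$ for real $\lambda$ (true, but a theorem of Helgason, not a triviality about ``totality''), and the inclusion $PWS_{\omega}(X)\subset S^{2}(X)$ needs uniform weighted elliptic estimates to pass from powers of $\Delta$ to all of $U(\textbf{g})$ with the weight $(1+|x|)^{N}$ carried along --- plausible on a space of bounded geometry, but this is essentially Andersen's real content, not a remark.

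The genuine gap is in the surjectivity step you yourself flag as the crux: your claim that ``each $\lambda$- or $b$-derivative brings down a factor polynomially bounded in $|x|$ through $A(x,b)$'' is false for $b$-derivatives. The bound $\|A(x,b)\|\leq |x|$ makes $\lambda$-differentiation harmless, but $b$-derivatives of $A(x,b)$ grow \emph{exponentially} in $|x|$: already on the hyperbolic plane $e^{A(z,b)}$ is a power of the Poisson kernel $(1-|z|^{2})/|z-b|^{2}$, and $|\partial_{b}A(z,b)|\asymp |z-b|^{-1}$ reaches size $e^{|x|}$ as $z$ approaches $b$. With only the Schwartz bound $|f(x)|\leq C_{M}(1+|x|)^{-M}\varphi_{0}(x)$, the $b$-differentiated integral then diverges: the spherical integral of $|\partial_{b}A|\,e^{\rho(A(x,b))}$ at radius $t$ is of order $e^{t/2}$, while $\varphi_{0}(t)\asymp (1+t)e^{-t/2}$ and the volume element is $\asymp e^{t}$, so one is left with $\int(1+t)^{-M+1}e^{t}\,dt=\infty$ for every $M$. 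Hence smoothness of $\widehat{f}$ in $b$ cannot be obtained by differentiating under the integral sign at all; it requires the cancellations encoded in Eguchi's asymptotic expansions of Eisenstein integrals, i.e.\ the \emph{forward} half of his Schwartz-space isomorphism --- which is not available from Theorem \ref{E} (that goes the opposite direction) and which your sketch does not supply. In short: your plan correctly locates the hard point but proposes a method that provably fails there, so the surjectivity direction remains unproved; to close it you must either cite Eguchi's full isomorphism theorem or reproduce Andersen's argument.
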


\section{Average sampling and almost Parseval frames in Paley-Wiener spaces on Riemannian manifolds}\label{AvS}

Let $X_{r}=\{x_{k}\}$  be a $r$-lattice and $\{B(x_{k},r)\}$ be an associated family of balls that satisfy only properties (1) and (2) of the Lemma \ref{cover}.
We define
$$
U_{1}=B(x_{1}, r/2)\setminus \cup_{i,\>i\neq 1}B(x_{i}, r/4),
$$
and
$$
U_{k}=B(x_{k},  r/2)\setminus \left(\cup_{j<k}U_{j}\cup_{i,\>i\neq k}B(x_{i},  r/4)\right).
$$

One can verify the following. 
\begin{lem} The sets $\left\{U_{k}\right\}$ form a disjoint measurable cover of $X$ and
\begin{equation}\label{disjcover}
B(x_{k}, r/4)\subset U_{k}\subset B(x_{k}, r/2)
\end{equation}
\end{lem}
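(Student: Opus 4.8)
The plan is to establish the four assertions—the right inclusion, the left inclusion, disjointness, and the covering property (together with measurability)—in an order so that each depends only on what precedes it, ensuring that the inductive definition of the $U_{k}$ introduces no circularity.

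First I would dispose of the routine parts. The inclusion $U_{k}\subset B(x_{k},r/2)$ is immediate, since by definition $U_{k}$ is $B(x_{k},r/2)$ with certain sets removed. Measurability follows by induction on $k$: each ball is open hence Borel, the set $\cup_{i,\>i\neq k}B(x_{i},r/4)$ is a countable union of open balls, and $\cup_{j<k}U_{j}$ is a finite union of sets already known to be measurable; thus $U_{k}$, being a difference of measurable sets, is measurable. Disjointness is likewise structural: for $l<k$ the set $U_{l}$ is one of the sets subtracted in the definition of $U_{k}$ (it lies inside $\cup_{j<k}U_{j}$), so $U_{k}\cap U_{l}=\emptyset$.

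The genuine computation is the left inclusion $B(x_{k},r/4)\subset U_{k}$. Given $y\in B(x_{k},r/4)$ I would verify the three conditions for membership in $U_{k}$: first, $y\in B(x_{k},r/2)$ since $r/4<r/2$; second, $y\notin B(x_{i},r/4)$ for every $i\neq k$, which is precisely the disjointness of the balls $B(x_{i},r/4)$ guaranteed by property (1) of Lemma \ref{cover}; and third, $y\notin U_{j}$ for every $j<k$. The last point is the crucial one: because $j\neq k$, the ball $B(x_{k},r/4)$ is among the sets subtracted when forming $U_{j}$, so $U_{j}\cap B(x_{k},r/4)=\emptyset$ and hence $y\notin U_{j}$. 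This argument uses only the defining formulas for the $U_{j}$, not the covering property, so there is no circular dependence.

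Finally, for the covering property $\bigcup_{k}U_{k}=X$ I would argue pointwise, splitting into two cases according to whether a given $x\in X$ lies in one of the balls $B(x_{i},r/4)$. If $x\in B(x_{i},r/4)$ for some (necessarily unique) $i$, then $x\in U_{i}$ by the left inclusion already proved. Otherwise $x$ lies in no such ball; since the balls $B(x_{i},r/2)$ cover $X$ by property (2), the index set $\{i:x\in B(x_{i},r/2)\}$ is nonempty, and I take its minimum $k$ (using that the lattice is indexed by $\mathbb{N}$). Then $x\in B(x_{k},r/2)$; the subtracted balls $B(x_{i},r/4)$ do not exclude $x$ since $x$ lies in none of them; and $x\notin U_{j}$ for $j<k$ because $U_{j}\subset B(x_{j},r/2)$ while minimality of $k$ forces $x\notin B(x_{j},r/2)$. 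Hence $x\in U_{k}$. The main obstacle is exactly this covering step: one must rule out that the iterative subtraction of the earlier $U_{j}$ and of the balls $B(x_{i},r/4)$ orphans some point, and the two-case split—settled by the already-established left inclusion in one case and by the minimal-index argument in the other—is what guarantees that it does not.
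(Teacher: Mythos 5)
Your proof is correct; the paper states this lemma with no proof at all (``One can verify the following''), and your argument supplies exactly the intended verification. In particular, you correctly identify and handle the only delicate point, the covering property, where the two-case split is genuinely needed: the minimal-index argument alone would fail for a point $x\in B(x_{i},r/4)$ with $x\in B(x_{j},r/2)$ for some $j<i$, since such an $x$ is removed from $U_{j}$, and it is precisely the already-established inclusion $B(x_{i},r/4)\subset U_{i}$ that rescues that case.
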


With every $U_{k}$ we associate a function $\psi_{k}\in C_{0}^{\infty}(U_{k})$ and we will always assume that for every $k$ it is not identical zero and
\begin{equation}\label{assumption}
0\leq \psi_{k}\leq 1,\>\>\>
\end{equation}
We introduce the following family $\Psi=\left\{\Psi_{k}\right\}$ of functionals $\Psi_{k}$ on $L_{2}(X)$:
 $$
 \Psi_{k}(F)=\frac{1}{|U_{k}|_{\psi_{k}} }\int_{U_{k}}F(x)\psi_{k}(x)d\mu(x)=\frac{1}{|U_{k}|_{\psi_{k}} }\int_{exp^{-1}_{y_{\nu}}(U_{k})}F(exp_{y_{\nu}}(x))\psi_{k}(exp_{y_{\nu}}(x))dx,\>\>\>
 $$
 where $x=(x_{1},...x_{d}),\>dx=dx_{1}...dx_{d}$ and
 $$|U_{k}|_{\psi_{k}} =\int_{exp^{-1}_{y_{\nu}}(U_{k})}\psi_{k}(exp_{y_{\nu}}x)dx=\int_{U_{k}}\psi_{k}(x)d\mu(x).
 $$
 Our {\it local Poincare-type inequality} is the following  \cite{Pes04b}.
\begin{lem}\label{LPI}
For    $m>d/2$ there exist constants $C=C(X, m)>0, \>\>r(X,m)>0,$
 such that for any $r$-lattice $X_{r}$ with $r<r(X,m)$ and any associated functional $\Psi_{k}$  the following inequality holds true for $f\in H^{m}(X)$: 
\begin{equation}\label{GPII}
\|(\varphi_{\nu}f)-\Psi_{k}((\varphi_{\nu}f))\|^{2}_{L_{2}(U_{k})}\leq
C(X, m)\sum_{1\leq |\alpha|\leq
m}r^{2|\alpha|}\|\partial^{\alpha}(\varphi_{\nu}f)\|^{2}_{L_{2}(B(x_{k}, r))},
\end{equation}
where  for $\alpha=(\alpha_{1},...,\alpha_{d})$
 $\partial^{\alpha}f=\partial^{\alpha_{1}}...\partial^{\alpha_{d}}f$ is a partial
derivative of order $|\alpha|$ in a fixed geodesic  coordinate system $\exp^{-1}_{y_{\nu}}$ in $B(y_{\nu}, \lambda)$ (see (\ref{Sob})).
\end{lem}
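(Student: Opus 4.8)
The plan is to reduce the inequality to a scale-invariant statement on a fixed Euclidean configuration and then to recover the powers of $r$ by a dilation. Fix the lattice point $x_{k}$, write $g=\varphi_{\nu}f$ and $c_{k}=\Psi_{k}(g)$ (a constant), and work in the geodesic chart $\exp^{-1}_{y_{\nu}}$. Pushing everything to $\mathbb{R}^{d}$, $d=\dim X$, the region $U_{k}$ maps to a set which, by the ball condition (\ref{ball-condtion}), is trapped between two Euclidean balls of radii comparable to $r$ about $z_{k}:=\exp^{-1}_{y_{\nu}}(x_{k})$, while $B(x_{k},r)$ contains a Euclidean ball of radius comparable to $r$ about $z_{k}$. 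The dilation $u\mapsto (u-z_{k})/r$ turns these into fixed balls $B(0,\rho_{1})\subset\widehat U\subset B(0,\rho_{2})$ of unit scale. A direct change of variables shows that each $\|\partial^{\alpha}\hat g\|^{2}_{L_{2}}$ equals $r^{\,2|\alpha|-d}\|\partial^{\alpha}g\|^{2}_{L_{2}}$ and $\|\hat g-c_{k}\|^{2}_{L_{2}(\widehat U)}=r^{-d}\|g-c_{k}\|^{2}_{L_{2}(U_{k})}$, while the weighted average is dilation invariant; the common factor $r^{-d}$ cancels and leaves precisely the weights $r^{2|\alpha|}$. Thus it suffices to prove, with a constant depending only on $d$ and $m$,
\begin{equation*}
\|\hat g-\widehat\Psi(\hat g)\|^{2}_{L_{2}(\widehat U)}\le C\sum_{1\le|\alpha|\le m}\|\partial^{\alpha}\hat g\|^{2}_{L_{2}(B(0,\rho_{2}))},
\end{equation*}
after which rescaling reproduces the stated inequality (the larger ball $B(x_{k},r)$ on the right only helps).

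The heart of the matter is to make the constant in this unit-scale estimate independent of the weight $\psi_{k}$, whose mass $\int\psi_{k}$ may be arbitrarily small, so that a naive duality bound on the functional $\Psi_{k}$ in $L_{2}$ would blow up. Here the hypothesis $m>d/2$ is decisive. Since $0\le\psi_{k}\le1$ and $\psi_{k}\not\equiv0$ by assumption (\ref{assumption}), the number $\widehat\Psi(\hat g)$ is a genuine convex average of the values of $\hat g$, hence lies between $\min_{\widehat U}\hat g$ and $\max_{\widehat U}\hat g$; consequently
\begin{equation*}
|\hat g(y)-\widehat\Psi(\hat g)|\le \operatorname{osc}_{B(0,\rho_{2})}\hat g,\qquad y\in\widehat U,
\end{equation*}
a bound that no longer sees $\psi_{k}$. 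Because $m>d/2$, the Sobolev--Morrey embedding $H^{m}(B(0,\rho_{2}))\hookrightarrow C^{0}(B(0,\rho_{2}))$ holds on this fixed ball; applying it to $\hat g-c$ and minimizing over constants $c$, and using the ordinary unweighted Poincar\'e inequality on the ball to absorb the $L_{2}$ part, gives
\begin{equation*}
\operatorname{osc}_{B(0,\rho_{2})}\hat g\le C\Big(\sum_{1\le|\alpha|\le m}\|\partial^{\alpha}\hat g\|^{2}_{L_{2}(B(0,\rho_{2}))}\Big)^{1/2}.
\end{equation*}
Multiplying the pointwise bound by $|\widehat U|\le|B(0,\rho_{2})|$ yields the desired unit-scale inequality. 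This is the step I expect to be the main obstacle, since it is exactly where uniformity in $\psi_{k}$ is won and where the regularity threshold $m>d/2$ enters.

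It remains to guarantee that every constant produced above is uniform over the lattice points $x_{k}$ and over all admissible radii, and this is supplied by the bounded geometry of $X$. Assumptions (a)--(c) together with the local doubling property (\ref{BG}) make the Riemannian density $\sqrt{|\det g_{ij}|}$ comparable to $1$ on each chart (it equals $1$ at the center for the orthonormal frame), so the measures $d\mu$ and $dx$ are uniformly equivalent on the balls in question, and the coordinate derivatives $\partial^{\alpha}$ used in (\ref{Sob}) transfer between the Riemannian and Euclidean pictures with bounds independent of $\nu$ and $k$. Choosing $r(X,m)$ small enough that every ball $B(x_{k},r)$ with $r<r(X,m)$ lies inside a single chart $B(y_{\nu},\lambda)$ where (\ref{ball-condtion}) applies and the density is controlled, the fixed Euclidean constant from the unit-scale estimate becomes the universal $C(X,m)$, which completes the proof.
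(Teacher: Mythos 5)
Your proof is correct, but it takes a genuinely different route from the paper's. The paper argues pointwise via Taylor's formula with integral remainder (\ref{TF}) in the geodesic chart: it averages that identity against $\psi_{k}$ over $U_{k}$ and applies the Cauchy--Schwarz inequality twice --- once, with $|(x-y)^{\alpha}|\leq r^{|\alpha|}$ and $0\leq\psi_{k}\leq 1$, to the terms of order $1\leq|\alpha|\leq m-1$, and once, in spherical coordinates, to the remainder, which is where $m>d/2$ enters: the factor $\eta^{2m-d}$ in (\ref{RT-2}) requires the integrability of $t^{2m-1-d}$ near $t=0$. You instead rescale to unit scale and invoke two pieces of standard elliptic machinery, the Sobolev--Morrey embedding $H^{m}\hookrightarrow C^{0}$ (your point of entry for $m>d/2$) combined with the Poincar\'e inequality, after the key structural observation that $\Psi_{k}$ is a convex average, so that the deviation from it is dominated by the oscillation of $\varphi_{\nu}f$. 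That observation buys something real: your constant is manifestly independent of $\psi_{k}$, whereas the paper's estimate of the lower-order block --- the Cauchy--Schwarz step leading to (\ref{1term}) --- in fact produces a factor $|U_{k}|/|U_{k}|_{\psi_{k}}$ (the normalization $|U_{k}|_{\psi_{k}}^{-2}$ is only partially cancelled), a dependence silently absorbed into $C(X,m)$ in the lemma as stated and controlled only later, when Theorem \ref{Frame-th} assumes $\sup_{k}|U_{k}|/|U_{k}|_{\psi_{k}}\leq 1+\delta$; so on the stated generality ("any associated functional $\Psi_{k}$") your argument is tighter than the source. What the paper's route buys is elementarity and portability: it uses nothing beyond Taylor's formula and Cauchy--Schwarz, tracks the powers $r^{2|\alpha|}$ explicitly rather than through a dilation, and adapts directly to $L_{p}$ or weighted variants where a compactness-free, embedding-free argument is preferable. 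Two minor points, neither a gap: your dilation-invariance of the average survives the Riemannian density $\sqrt{|\det g_{ij}|}$ only because that density can be absorbed into the nonnegative weight (the convex-average bound needs only nonnegativity and positive mass, as you implicitly use); and your derivative norms come out over a ball of radius $\rho_{2}r$ rather than $r$ --- but the paper shares exactly this blemish, since (\ref{1term}) literally has $B(x_{k},2r)$ while the statement has $B(x_{k},r)$, and both are repaired by renaming $r$.
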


\begin{proof}It is obvious that the following relations hold:
$$
B_{T_{y_{\nu}}}(x_{k}, r/4)=exp^{-1}_{y_{\nu}}B(x_{k}, r/4)\subset exp^{-1}_{y_{\nu}}U_{k}\subset exp^{-1}_{y_{\nu}}B(x_{k}, r)=B_{T_{y_{\nu}}}(x_{k}, r)
$$
For every smooth $f$   and all
 $y=(y_{1},...,y_{d})\in exp^{-1}_{y_{\nu}}U_{k}\subset B(x_{k}, r/2),\>\>x=(x_{1},...,x_{d})\in B(x_{k}, r/2)$, we have the
following
$$
\varphi_{\nu}f(\exp_{y_{\nu}} x)   =
\varphi_{\nu}f(\exp_{y_{\nu}} y)    +
\sum_{1\leq|\alpha|\leq m-1} \frac{1}{\alpha
!}\partial^{\alpha}\varphi_{\nu}f(\exp_{y_{\nu}} y)
(x-y)^{\alpha}+
$$
\begin{equation}\label{TF}
\sum_{|\alpha|=m}\frac{1}{\alpha !}\int_{0}^{\eta}t^{m-1}\partial
^{\alpha}\varphi_{\nu}f(\exp_{y_{\nu}}(y+t\vartheta))\vartheta^{\alpha}dt,
\end{equation}
where 
$ \>\>\>\alpha=(
\alpha_{1},...,\alpha_{d}),\>\>\>\alpha!=\alpha_{1}!...\alpha_{d}!$
, $(x-y)^{\alpha}=(x_{1}-y_{1}))^{\alpha_{1}}...
(x_{d}-y_{d})^{\alpha_{d}},\>\>$
$ \eta=\|x-y\|,\>\> \vartheta=(x-y)/\eta.$

We multiply this inequality by $\psi_{k}(exp_{y_{\nu}}y)$ and integrate over
 $exp^{-1}_{y_{\nu}}U_{k}\subset B_{T_{y_{\nu}}}(x_{k}, r)$ with respect  to $d\mu( y)$. It gives
$$
\varphi_{\nu}f(\exp_{y_{\nu}}x)-\Psi_{k}(\varphi_{\nu}f)=
$$
$$
|U_{k}|_{\psi_{k}}^{-1}\int_{exp^{-1}_{y_{\nu}}U_{k}}\left(\sum_{1\leq|\alpha|\leq
m-1} \frac{1}{\alpha !}\partial^{\alpha}\varphi_{\nu}f(\exp_{y_{\nu}}y)(x-y) ^{\alpha}\right)\psi_{k}(exp_{y_{\nu}}y)dy+
$$
$$|U_{k}|_{\psi_{k}}^{-1}\int_{exp^{-1}_{y_{\nu}}U_{k}}\left(
\sum_{|\alpha|=m}\frac{1}{(m-1)!}\int_{0}^{\eta}t^{m-1}\partial
^{\alpha}\varphi_{\nu}f(\exp_{y_{\nu}}y+t\vartheta)\vartheta^{\alpha}dt\right)\psi_{k}(exp_{y_{\nu}}y)dy,
$$
where 
$$
\Psi_{k}(\varphi_{\nu}f\circ\exp_{y_{\nu}})=\int_{exp^{-1}_{y_{\nu}}U_{k}}\varphi_{\nu}f\circ\exp_{y_{\nu}}(x)\psi_{k}(\exp_{y_{\nu}}(x))dx=
$$
$$
\int_{U_{k}}\varphi_{\nu}f(x)\psi_{k}(x)d\mu(x)=\Psi_{k}(\varphi_{\nu}f).
$$
Then since $\psi_{k}\geq 0$

$$
\left|\varphi_{\nu}f(\exp_{y_{\nu}}x)-\Psi_{k}(\varphi_{\nu}f)\right|\leq
$$
$$
|U_{k}|_{\psi_{k}}^{-1}\sum_{1\leq|\alpha|\leq
m-1} \frac{1}{\alpha !}\int_{exp^{-1}_{y_{\nu}}U_{k}}\left|\partial^{\alpha}\varphi_{\nu}f(\exp_{y_{\nu}}y)(x-y) ^{\alpha}\right|\psi_{k}(exp_{y_{\nu}}y)dy+
$$
$$|U_{k}|_{\psi_{k}}^{-1}\sum_{|\alpha|=m}\frac{1}{(m-1)!}
\int_{exp^{-1}_{y_{\nu}}U_{k}}\left|\int_{0}^{\eta}t^{m-1}\partial
^{\alpha}\varphi_{\nu}f(\exp_{y_{\nu}}y+t\vartheta)\vartheta^{\alpha}dt\right|\psi_{k}(exp_{y_{\nu}}y)dy,
$$
We square this inequality and integrate over $exp^{-1}_{y_{\nu}}U_{k}$:
$$\|\varphi_{\nu}f-\Psi_{k}(\varphi_{\nu}f)\|^{2}_{L_{2}(U_{k})}=
\|\varphi_{\nu}f(\exp_{y_{\nu}}x)-\Psi_{k}(\varphi_{\nu}f)\|^{2}_{L_{2}(exp^{-1}_{y_{\nu}}U_{k})}\leq
$$
$$
C(m)|U_{k}|_{\psi_{k}}^{-2}\sum_{1\leq|\alpha|\leq
m-1}\int_{exp^{-1}_{y_{\nu}}U_{k}}\left(
\int_{exp^{-1}_{y_{\nu}}U_{k}}|\partial^{\alpha}\varphi_{\nu}f(\exp_{y_{\nu}}y)(x-y) ^{\alpha}|\psi_{k}(exp_{y_{\nu}}y)dy\right)^{2}dx+
$$
\begin{equation}\label{Two-integrals}
C(m)|U_{k}|_{\psi_{k}}^{-2}
\sum_{|\alpha|=m}\int_{exp^{-1}_{y_{\nu}}U_{k}}\left(\int_{exp^{-1}_{y_{\nu}}U_{k}}\left |\int_{0}^{\eta}t^{m-1}\partial
^{\alpha}\varphi_{\nu}f(\exp_{y_{\nu}}y+t\vartheta)\vartheta^{\alpha}dt \right |\psi_{k}(exp_{y_{\nu}}y)dy\right)^{2}dx=
$$
$$
I+II.
\end{equation}
Since $exp^{-1}_{y_{\nu}}U_{k}\subset B_{T_{y_{\nu}}}(x_{k},r/2),\>\>x, y\in U, $ one has $x-y\in B(x_{k},r)$, and an application of the Schwartz inequality gives 

$$
\int_{exp^{-1}_{y_{\nu}}U_{k}}|\partial^{\alpha}\varphi_{\nu}f(\exp_{y_{\nu}}y)(x-y) ^{\alpha}|\psi_{k}(exp_{y_{\nu}}y)dy\leq
$$
$$
 r^{|\alpha|}|U_{k}|^{1/2}_{\psi_{k}}\|\partial^{\alpha}\varphi_{\nu}f\circ exp_{y_{\nu}}\|_{L_{2}(exp^{-1}_{y_{\nu}}B(x_{k}, r))}.
$$
After all we obtain
\begin{equation}\label{1term}
I\leq C(X,m)\sum_{1\leq 
|\alpha|\leq m-1}r^{2|\alpha|}\|\partial^{\alpha}\varphi_{\nu}f\|^{2}_{L_{2}(B(x_{k}, 2r))}.
\end{equation}
Another application of the Schwartz inequality gives for $|\alpha|=m$
$$\left(\int_{exp^{-1}_{y_{\nu}}U_{k}}\left |\int_{0}^{\eta}t^{m-1}\partial
^{\alpha}\varphi_{\nu}f(\exp_{y_{\nu}}y+t\vartheta)\vartheta^{\alpha}dt \right |\psi_{k}(exp_{y_{\nu}}y)dy\right)^{2}\leq
$$
$$
\left(\int_{exp^{-1}_{y_{\nu}}U_{k}}\psi_{k}(exp_{y_{\nu}}y)dy\right)\left(\int_{exp^{-1}_{y_{\nu}}U_{k}}\left |\int_{0}^{\eta}t^{m-1}\partial
^{\alpha}\varphi_{\nu}f(\exp_{y_{\nu}}y+t\vartheta)\vartheta^{\alpha}dt \right |^{2}\psi_{k}(exp_{y_{\nu}}y)dy\right)=
$$
$$
|U_{k}|_{\psi_{k}}\left(\int_{exp^{-1}_{y_{\nu}}U_{k}}\left |\int_{0}^{\eta}t^{m-1}\partial
^{\alpha}\varphi_{\nu}f(\exp_{y_{\nu}}y+t\vartheta)\vartheta^{\alpha}dt \right |^{2}\psi_{k}(exp_{y_{\nu}}y)dy\right).
$$
By the  same Schwartz inequality using the  assumption $m>d/2$ one can  obtain the following estimate 
$$
\left |\int_{0}^{\eta}t^{m-1}\partial
^{\alpha}\varphi_{\nu}f(exp_{y_{\nu}}y+t\vartheta)\vartheta^{\alpha}dt\right |^{2}\leq
C\eta^{2m-d}\int_{0}^{\eta}t^{d-1}|\partial^{\alpha}
\varphi_{\nu}f(exp_{y_{\nu}}y+t\vartheta)|^{2}dt.
$$
Next,
$$
\int_{exp^{-1}_{y_{\nu}}U_{k}}\left |\int_{0}^{\eta}t^{m-1}\partial
^{\alpha}\varphi_{\nu}f(exp_{y_{\nu}}y+t\vartheta)\vartheta^{\alpha}dt\right 
|^{2}\psi_{k}(exp_{y_{\nu}}y)dy\leq
$$
$$C\int_{exp^{-1}_{y_{\nu}}U_{k}}
\eta^{2m-d}\int_{0}^{\eta}t^{d-1}|\partial^{\alpha}\varphi_{\nu}f(exp_{y_{\nu}}y+t\vartheta)|^{2}dt\psi_{k}(exp_{y_{\nu}}y)dy
$$
Thus, we have
$$
II\leq
$$
$$
C(X,m)|U_{k}|_{\psi_{k}}^{-1}\sum_{|\alpha|=m}\int_{exp^{-1}_{y_{\nu}}U_{k}}\left(\int_{exp^{-1}_{y_{\nu}}U_{k}}
\eta^{2m-d}\int_{0}^{\eta}t^{d-1}|\partial^{\alpha}\varphi_{\nu}f(exp_{y_{\nu}}y+t\vartheta)|^{2}dtdx\right)\psi_{k}(exp_{y_{\nu}}y)dy.
$$
We perform integration in parentheses  using the spherical coordinate system
$(\eta,\vartheta).$ Since $\eta\leq r$ we obtain for $|\alpha|=m$ 
\begin{equation}\label{RT-2}
\int_{0}^{r/2}\eta^{d-1}\int_{0}^{2\pi}
\left |\int_{0}^{\eta}t^{m-1}\partial
^{\alpha}\varphi_{\nu}f(exp_{y_{\nu}}y+t\vartheta)\vartheta^{\alpha}dt\right |^{2} d\vartheta
d\eta\leq
$$
$$C\int_{0}^{r/2}t^{d-1}\left(\int_{0}^{2\pi}\int_{0}^{r}
\eta^{2m-d}|\partial^{\alpha}\varphi_{\nu}f(exp_{y_{\nu}}y+t\vartheta)|^{2}
\eta^{d-1}d\eta d\vartheta\right)dt\leq
$$
$$
C(X,m)r^{2m}\|\partial^{\alpha}
\varphi_{\nu}f\|^{2}_{L_{2}(B(x_{k},r))},\>\>\>|\alpha|=m.
\end{equation}
Now the estimate (\ref{Two-integrals})
along with  (\ref{1term}) and (\ref{RT-2}) imply (\ref{GPII}). Lemma is proved.
\end{proof}

We introduce the following set of functionals
$$
\mathcal{A}_{k}(f)=\sqrt{|U_{k}}|\Psi_{k}(f)=\frac{\sqrt{|U_{k}}|} {|U_{k}|_{\psi_{k}}    }\int_{U_{k}}f(x)\psi_{k}d\mu(x),
$$
where 
$$
|U_{k}|=\int_{U_{k}} d\mu(x),
$$
and
$$
|U_{k}|_{\psi_{k}}=\int_{U_{k}} \psi_{k}(x)d\mu(x),\>\>\>\psi_{k}\in C_{0}^{\infty}(U_{k}),
$$
where $\psi_{k}$  is not identical to zero. Our {\it global Poincare-type inequality} is the following (compare to \cite{Pes00}, \cite{Pes04b}).
\begin{lem}\label{GPI}
For any   $0<\delta<1$ and $m>d/2$ there exist constants $c=c(X),\>C=C(X,m),$  such that the  following inequality holds true for any $r$-lattice with $r<c\delta$
\begin{equation}\label{gpi}
(1-2\delta/3)\|f\|^{2} \leq
\sum_{k}|\mathcal{A}_{k}(f)|^{2}+C\delta^{-1}r^{2m}\|\Delta^{m/2}f\|^{2}
\, \, \mbox{for all } f\in H^{m}(X).
\end{equation}
\end{lem}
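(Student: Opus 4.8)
The plan is to compare the $L_2$-energy of $f$ cell by cell on the disjoint cover $\{U_k\}$ of (\ref{disjcover}), replacing $f$ on each $U_k$ by its constant sample value $\Psi_k(f)$ and controlling the replacement error through the local Poincar\'e inequality of Lemma \ref{LPI}. Since $\{U_k\}$ is a disjoint measurable cover, $\|f\|^2=\sum_k\|f\|^2_{L_2(U_k)}$. Fix $k$ and write $a_k=\|f\|_{L_2(U_k)}$ and $e_k=\|f-\Psi_k(f)\|_{L_2(U_k)}$. Because $\Psi_k(f)$ is a constant on $U_k$, we have $\|\Psi_k(f)\|^2_{L_2(U_k)}=|U_k|\,|\Psi_k(f)|^2=|\mathcal{A}_k(f)|^2$, so the reverse triangle inequality gives $|\mathcal{A}_k(f)|\ge a_k-e_k$ and hence $|\mathcal{A}_k(f)|^2\ge a_k^2-2a_ke_k$ (valid whether or not $a_k\ge e_k$). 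Applying Young's inequality $2a_ke_k\le \tfrac{\delta}{3}a_k^2+\tfrac{3}{\delta}e_k^2$ and summing over $k$ yields the one-sided comparison
\[
\Bigl(1-\tfrac{\delta}{3}\Bigr)\|f\|^2\le \sum_k|\mathcal{A}_k(f)|^2+\frac{3}{\delta}\sum_k\|f-\Psi_k(f)\|^2_{L_2(U_k)}.
\]
Everything now reduces to estimating the aggregate remainder $\sum_k\|f-\Psi_k(f)\|^2_{L_2(U_k)}$.

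Lemma \ref{LPI} is phrased for $\varphi_\nu f$, so I would insert the partition of unity. On $U_k$ we have $f=\sum_\nu\varphi_\nu f$, and by linearity $f-\Psi_k(f)=\sum_\nu\bigl(\varphi_\nu f-\Psi_k(\varphi_\nu f)\bigr)$; by bounded geometry only a uniformly bounded number $L$ of the supports $B(y_\nu,\lambda/2)$ meet the small cell $U_k$, so
\[
\|f-\Psi_k(f)\|^2_{L_2(U_k)}\le L\sum_\nu\|\varphi_\nu f-\Psi_k(\varphi_\nu f)\|^2_{L_2(U_k)}.
\]
Lemma \ref{LPI} bounds each term on the right by $C(X,m)\sum_{1\le|\alpha|\le m}r^{2|\alpha|}\|\partial^\alpha(\varphi_\nu f)\|^2_{L_2(B(x_k,r))}$. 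Summing over $k$ and using that the balls $\{B(x_k,r)\}$ have multiplicity at most $N_X$ (Lemma \ref{cover}) turns $\sum_k\|\cdot\|^2_{L_2(B(x_k,r))}$ into $N_X\|\cdot\|^2_{L_2(X)}$, while $\sum_\nu\sum_{1\le|\alpha|\le m}\|\partial^\alpha(\varphi_\nu f)\|^2_{L_2(X)}=\|f\|^2_{H^m(X)}$ by the definition (\ref{Sob}) of the Sobolev norm, which by the regularity theorem is equivalent to $\|f\|^2+\|\Delta^{m/2}f\|^2$.

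Finally I would use $r<c\delta<1$ to close the estimate. Grouping the derivative terms by order and invoking $\sum_{|\alpha|=j}\|\partial^\alpha(\cdot)\|^2\sim\|\Delta^{j/2}(\cdot)\|^2$, the aggregate remainder is controlled by a multiple of $\sum_{j=1}^m r^{2j}\|\Delta^{j/2}f\|^2$. Since $-\Delta$ has spectrum in $[\|\rho\|^2,\infty)$ it is boundedly invertible, so $\|\Delta^{j/2}f\|\le C\|\Delta^{m/2}f\|$ for $1\le j\le m$, and because $r<1$ the sum is at most $Cr^2\bigl(\|f\|^2+\|\Delta^{m/2}f\|^2\bigr)$. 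Choosing $c=c(X)$ so small that the factor $\tfrac{3}{\delta}\cdot Cr^2$ multiplying $\|f\|^2$ stays below $\tfrac{\delta}{3}$ (which holds once $r<c\delta$) absorbs that contribution into the left-hand side, upgrading $1-\tfrac{\delta}{3}$ to $1-\tfrac{2\delta}{3}$. The main obstacle is the final reconciliation of the power of $r$: the remainder furnished by Lemma \ref{LPI} is dominated by its lowest-order ($|\alpha|=1$) term $r^2\|\Delta^{1/2}f\|^2$, so recovering the sharper $r^{2m}\|\Delta^{m/2}f\|^2$ recorded in (\ref{gpi}) requires absorbing the orders $1\le j<m$ into $\|f\|^2$ by interpolation ($\|\Delta^{j/2}f\|^2\le \varepsilon\|\Delta^{m/2}f\|^2+C_\varepsilon\|f\|^2$) rather than into $\|\Delta^{m/2}f\|^2$; doing so while simultaneously holding the $\|f\|^2$-coefficient below $\tfrac{\delta}{3}$ and the retained coefficient at $O(\delta^{-1})$ is the delicate accounting, and it is precisely here that the smallness $r<c\delta$ and the spectral gap of $\Delta$ must be exploited most carefully.
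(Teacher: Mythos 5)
Your proposal has the same skeleton as the paper's proof: split the energy over the disjoint cover $\{U_k\}$, trade the sample comparison at cost $\tfrac{\delta}{3}$ on the main term and $\tfrac{3}{\delta}$ on the error (your reverse-triangle-plus-Young step is exactly the paper's elementary inequality (\ref{ineq-1}) applied with $\alpha=\delta/3$), control the error by Lemma \ref{LPI}, aggregate using the multiplicity $N_X$ of Lemma \ref{cover} and the definition (\ref{Sob}), and absorb lower orders using $r<c\delta$. The one genuine structural difference is where the partition of unity enters: you compare $f$ itself with $\Psi_k(f)$, so you prove the inequality for $\sum_k|\mathcal{A}_k(f)|^2$ as the lemma literally states, inserting $\sum_\nu\varphi_\nu$ only inside the remainder with a finite-overlap constant $L$ (a legitimate Cauchy--Schwarz step). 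The paper instead keeps $\varphi_\nu f$ throughout and actually establishes the bound with $\sum_\nu\sum_k|U_k|\,|\Psi_k(\varphi_\nu f)|^2$ in place of $\sum_k|\mathcal{A}_k(f)|^2$; these are different quantities, and it is the paper's version that is used downstream in Theorem \ref{Frame-th}, since $\left<f,\theta_{\nu,k}\right>=\sqrt{|U_k|}\,\Psi_k(\varphi_\nu f)$. So your route matches the printed statement more faithfully, while the paper's matches its application; be aware that if the lemma is later quoted in the $\varphi_\nu f$ form, your argument needs to be rerun with $\varphi_\nu f$ in place of $f$ (which only simplifies it, removing the overlap step).

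On the step you flag as ``delicate accounting'': it is not open-ended --- it is precisely the paper's displayed interpolation inequality (\ref{interpol}), $r^{2j}\|\Delta^{j/2}f\|^{2}\leq 4a^{m-j}r^{2m}\|\Delta^{m/2}f\|^{2}+ca^{-j}\|f\|^{2}$ for every $a>0$ and $0\leq j\leq m$, which follows from $\|\Delta^{j/2}f\|^{2}\leq\|\Delta^{m/2}f\|^{2j/m}\|f\|^{2(m-j)/m}$ and Young's inequality with the choice $t=a^{m-j}r^{2(m-j)}$. Note this holds for any nonnegative self-adjoint operator, so the spectral gap of $\Delta$ you invoke is not needed; your first attempt via bounded invertibility, $\|\Delta^{j/2}f\|\leq C\|\Delta^{m/2}f\|$, yields only the useless coefficient $r^{2}$, as you correctly recognized and discarded. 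Choosing $a\sim\delta^{-1}$ in (\ref{interpol}) makes the aggregate $\|f\|^{2}$-contribution $O(\delta)\|f\|^{2}$, absorbable into the left-hand side together with the $r^{2}\delta^{-1}\|f\|^{2}$ term once $r<c\delta$, exactly as you describe; the orders $1\leq j<m$ then load a factor $a^{m-j}\sim\delta^{-(m-j)}$ onto the $r^{2m}$-coefficient, which the paper silently buries in its constant $C'''$, and which is in any case harmless since $r<c\delta$ supplies spare powers of $\delta$ (in particular in the application within Theorem \ref{Frame-th}). So your plan closes; you stopped one inequality short of the finish line, and that inequality is already in the paper.
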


\begin{proof} 

We will need the  inequality (\ref{ineq-1}) below. 
One has  for all $\alpha>0$
$$
|A|^{2}=|A-B|^{2}+2|A-B||B|+|B|^{2},\>\>\>\>\>
2|A-B||B|\leq\alpha^{-1}|A-B|^{2}+\alpha|B|^{2},
$$
which imply  the inequality 
$$
(1+\alpha)^{-1}|A|^{2}\leq\alpha^{-1}|A-B|^{2}+|B|^{2},\>\>\alpha>0.
$$
If, in addition, $0<\alpha<1$, then    one has 
\begin{equation}\label{ineq-1}
(1-\alpha)|A|^{2}\leq \frac{1}{\alpha}|A-B|^{2}+|B|^{2},\>\>0<\alpha<1.
\end{equation}
We have 
$$
(1-\alpha)|\varphi_{\nu}f|^{2}\leq \frac{1}{\alpha}|\varphi_{\nu}f-\Psi_{k}(\varphi_{\nu}f)|^{2}+\left|\Psi_{k}(\varphi_{\nu}f)\right|^{2},
$$
and since $U_{k}$ form a disjoint cover of $X$  we obtain
\begin{equation}\label{ineq-2}
(1-\alpha)\|\varphi_{\nu}f\|^{2}_{L_{2}(B(y_{\nu},\lambda/2))}=\sum_{k}
(1-\alpha)\|\varphi_{\nu}f\|^{2}_{L_{2}(U_{k})}\leq 
$$
$$
\alpha^{-1}\sum_{k}\|\varphi_{\nu}f-\Psi_{k}(\varphi_{\nu}f)\|^{2} _{L_{2}(U_{k})}+
\sum_{k}|U_{k}||\Psi_{k}(\varphi_{\nu}f)|^{2},\>\>\>|U_{k}|=\int_{U_{k}}d\mu(x).
\end{equation}
For $\alpha=\delta/3$  by using (\ref{GPII}) we have
$$
(1-\delta/3) \|\varphi_{\nu}f\|^{2}_{L_{2}(B(y_{\nu},\lambda/2))}\leq \sum_{k}|U_{k}| |\Psi_{k}(\varphi_{\nu}f)|^{2}+
3\delta^{-1}
\sum_{k}\|\varphi_{\nu}f-\Psi_{k}(\varphi_{\nu}f)\|^{2} _{L_{2}(U_{k})}\leq
$$
$$
\sum_{k}|U_{k}| |\Psi_{k}(\varphi_{\nu}f)|^{2}+
3C\delta^{-1}\sum_{k}\sum_{1\leq |\alpha|\leq
m}r^{2|\alpha|}\|\partial^{\alpha}(\varphi_{\nu}f)\|^{2}_{L_{2}(B(x_{k}, r)}\leq
$$
$$
\sum_{k}|U_{k}| |\Psi_{k}(\varphi_{\nu}f)|^{2}+
3CN_{X}\delta^{-1}\sum_{1\leq |\alpha|\leq
m}r^{2|\alpha|}\|\partial^{\alpha}(\varphi_{\nu}f)\|^{2}_{L_{2}((B(y_{\nu},\lambda/2))},
$$
and summation over $\nu$ gives 
$$
(1-\delta/3) \|f\|^{2}_{L_{2}(X)}=(1-\delta/3)\sum_{\nu} \|\varphi_{\nu}f\|^{2}_{L_{2}(B(y_{\nu},\lambda/2))}\leq
$$
$$
\sum_{\nu}\sum_{k}|U_{k}| |\Psi_{k}(\varphi_{\nu}f)|^{2}+
3CN_{X}\delta^{-1}\sum_{\nu}\sum_{1\leq |\alpha|\leq
m}r^{2|\alpha|}\|\partial^{\alpha}(\varphi_{\nu}f)\|^{2}_{L_{2}((B(y_{\nu},\lambda/2)))}\leq 
$$
$$
\sum_{\nu}\sum_{k}|U_{k}| |\Psi_{k}(\varphi_{\nu}f)|^{2}+
3CN_{X}\delta^{-1}\sum_{j=1}^{m}r^{2j}\|f\|^{2}_{H^{j}(X)}
$$

The regularity theorem for the elliptic second-order differential   operator $\Delta$   shows that for all $j\leq m$ there exists a $b=b(X,m)$ such that 
 \begin{equation}\label{reg}
\|f\|^{2}_{H^{j}(X)}\leq b\left(\|f\|^{2}_{L_{2}(X)}+\|\Delta^{j/2}f\|^{2}_{L_{2}(X)}\right),\>\>f\in \mathcal{D}(\Delta^{m/2}),\>\>b=b(X,m).
\end{equation}
Together with the following interpolation inequality which holds for general  self-adjoint operators 
\begin{equation}\label{interpol}
r^{2j}\|\Delta^{j/2}f\|^{2}_{L_{2}(X)}\leq 4a^{m-j}r^{2m}\|\Delta^{m/2}f\|^{2}_{L_{2}(X)}
+ca^{-j}\|f\|^{2}_{L_{2}(X)}, \>\>\>c=c(X, m),
\end{equation}
 for any $a,\>r>0, \>0\leq j\leq m$, it implies  that  there exists a constant  $C^{''}=C^{''}(X,m)$ such that the next inequality holds true
$$
(1-\delta/3)\|f\|^{2}_{L_{2}(X)}\leq 
\sum_{\nu}\sum_{k}|U_{k}| |\Psi_{k}(\varphi_{\nu}f)|^{2}+
$$
$$
C^{''}\left(r^{2}\delta^{-1}\|f\|^{2}_{L_{2}(X)}+r^{2m}\delta^{-1}\|\Delta^{m/2}f\|^{2}_{L_{2}(X)}+a^{-1}\|f\|^{2}_{L_{2}(X)}\right),
$$
where $ m>d/2.$  By choosing $a=(6C^{''}/\delta)>1$ we obtain, that there exists   a  constant  $C^{'''}=C^{'''}(X, m)$ such that for any $0<\delta<1$ and $\>\>\>r>0$
$$
(1-\delta/2)\|f\|^{2}_{L_{2}(X)}\leq 
\sum_{\nu}\sum_{k}|U_{k}| |\Psi_{k}(\varphi_{\nu}f)|^{2}+C^{'''}\left(r^{2}\delta^{-1}\|f\|^{2}_{L_{2}(X)}+
r^{2m}\delta^{-1}\|\Delta^{m/2}f\|^{2}_{L_{2}(X)}\right).
$$
 The last inequality shows, that if for  a given $0<\delta<1$ the value of $r$ is choosen such that 
 $$
 r<c\delta,\>\>\> \>\>c=\frac{1}{\sqrt{6C^{'''}}},\>\>\>\> C^{'''}=C^{'''}(X, m), 
 $$
then we obtain for a $m>d/2$
$$
(1-2\delta/3)\|f\|^{2}_{L_{2}(X)}\leq \sum_{\nu}\sum_{k}|U_{k}| |\Psi_{k}(\varphi_{\nu}f)|^{2}+C^{'''}\delta^{-1}r^{2m}\|\Delta^{m/2}f\|^{2}_{L_{2}(X)},
$$
where $|U_{k}|=\int_{U_{k}}d\mu(x).$
Lemma is proved.
\end{proof}
\begin{defn}\label{FrameDef}
We introduce  the following functions $\theta_{\nu, k}\in C_{0}^{\infty}(X)$
\begin{equation}\label{theta}
\theta_{\nu, k}=\frac{\sqrt{|U_{k}|}}{|U_{k}|_{\psi_{k}} }\psi_{k}\varphi_{\nu}.
\end{equation}
Thus  for any  $f\in L_{2}(X)$ we have
$$
\left<f, \theta_{\nu, k}\right>=\frac{\sqrt{|U_{k}|}}{|U_{k}|_{\psi_{k}} }\int_{U_{k}}f\psi_{k}\varphi_{\nu}d\mu(x). 
$$
\end{defn}

\begin{thm}\label{Frame-th}
There exists a  constant $a_{0}=a_{0}(X)$ such that, if for a given $0<\delta<1$ and an $\omega>0$ one has
\begin{equation}\label{rate}
r<a_{0}\delta^{1/d}(\omega^{2}+\|\rho\|^{2}) ^{-1/2},
\end{equation}
and the weight functions $\psi_{k}$ chosen in a way that 
\begin{equation}\label{measure-condition}
1\leq\sup_{k}\frac{|U_{k}|}{|U_{k}|_{\psi_{k}} }\leq 1+\delta,
\end{equation}
then for the  corresponding set of functions $\theta_{\nu, k}$  defined in (\ref{theta}) the following inequalities hold\begin{equation}\label{frame-ineq-PW}
(1-\delta)\|f\|^{2}_{L_{2}(X)}\leq \sum_{\nu}\sum_{k}\left |\left<f, \theta_{\nu, k}\right>\right|^{2}\leq(1+\delta)\|f\|^{2}_{L_{2}(X)},
$$
or
$$
(1-\delta)\|f\|^{2}_{L_{2}(X)}\leq \sum_{k}|U_{k}| |\Psi_{k}(\varphi_{\nu}f)|^{2}\leq (1+\delta)\|f\|^{2}_{L_{2}(X)},
\end{equation}
where $\>0<\delta<1,\>\>f\in PW_{\omega}(X).$
\end{thm}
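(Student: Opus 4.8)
The plan is to read off that the two inequalities in (\ref{frame-ineq-PW}) are one and the same statement, and then to prove the upper and lower frame bounds separately. From Definition \ref{FrameDef},
\[
\left<f,\theta_{\nu,k}\right>=\frac{\sqrt{|U_{k}|}}{|U_{k}|_{\psi_{k}}}\int_{U_{k}}(\varphi_{\nu}f)\,\psi_{k}\,d\mu(x)=\sqrt{|U_{k}|}\,\Psi_{k}(\varphi_{\nu}f),
\]
so $\left|\left<f,\theta_{\nu,k}\right>\right|^{2}=|U_{k}|\,|\Psi_{k}(\varphi_{\nu}f)|^{2}$, and after summing over $\nu,k$ the two displays in (\ref{frame-ineq-PW}) coincide. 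Hence it suffices to bound $S(f):=\sum_{\nu}\sum_{k}|U_{k}|\,|\Psi_{k}(\varphi_{\nu}f)|^{2}$ from above and below. Note that the left inequality in (\ref{measure-condition}) is automatic, since $0\le\psi_{k}\le 1$ forces $|U_{k}|_{\psi_{k}}\le|U_{k}|$; only the upper bound $|U_{k}|\le(1+\delta)|U_{k}|_{\psi_{k}}$ will actually be used.

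For the upper bound I would argue for an arbitrary $f\in L_{2}(X)$, using neither the band limitation nor the rate. Splitting $\psi_{k}=\psi_{k}^{1/2}\psi_{k}^{1/2}$ and applying the Cauchy--Schwarz inequality gives
\[
|U_{k}|_{\psi_{k}}^{2}\,|\Psi_{k}(\varphi_{\nu}f)|^{2}=\left|\int_{U_{k}}(\varphi_{\nu}f)\,\psi_{k}\,d\mu\right|^{2}\le|U_{k}|_{\psi_{k}}\int_{U_{k}}|\varphi_{\nu}f|^{2}\psi_{k}\,d\mu .
\]
Using $0\le\psi_{k}\le 1$ together with the hypothesis (\ref{measure-condition}) one gets $|U_{k}|\,|\Psi_{k}(\varphi_{\nu}f)|^{2}\le(1+\delta)\int_{U_{k}}|\varphi_{\nu}f|^{2}\,d\mu$. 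Since $\{U_{k}\}$ is a disjoint cover of $X$ and the partition of unity satisfies $\sum_{\nu}\varphi_{\nu}^{2}=1$ (as used in the proof of Lemma \ref{GPI}), summing over $k$ and then over $\nu$ yields $S(f)\le(1+\delta)\sum_{\nu}\|\varphi_{\nu}f\|^{2}=(1+\delta)\|f\|^{2}$, the right-hand frame bound.

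The lower bound is the heart of the matter, and here the band limitation is essential. I would begin from the global Poincar\'e inequality of Lemma \ref{GPI}, valid once $r<c\delta$:
\[
(1-2\delta/3)\|f\|^{2}\le S(f)+C\delta^{-1}r^{2m}\|\Delta^{m/2}f\|^{2},\qquad m>d/2 .
\]
I would fix the integer $m=d$ (any $m\ge d$ works). For $f\in PW_{\omega}(X)$ the Bernstein inequality (\ref{B}) with $s=m/2$ gives $\|\Delta^{m/2}f\|^{2}\le(\omega^{2}+\|\rho\|^{2})^{m}\|f\|^{2}$, so the residual term is at most $C\delta^{-1}\bigl(r^{2}(\omega^{2}+\|\rho\|^{2})\bigr)^{d}\|f\|^{2}$. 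The rate condition (\ref{rate}) says precisely that $r^{2}(\omega^{2}+\|\rho\|^{2})<a_{0}^{2}\delta^{2/d}$, so the residual is $\le Ca_{0}^{2d}\delta\,\|f\|^{2}$; choosing $a_{0}=a_{0}(X)$ so small that $Ca_{0}^{2d}\le 1/3$ makes it $\le(\delta/3)\|f\|^{2}$. Therefore $S(f)\ge(1-2\delta/3-\delta/3)\|f\|^{2}=(1-\delta)\|f\|^{2}$, the left-hand frame bound.

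The step I expect to be the main obstacle is precisely this last absorption in the lower bound. Lemma \ref{GPI} never produces a clean constant by itself: it always leaves a Sobolev residual $\|\Delta^{m/2}f\|$, and the only mechanism to convert it into an honest $O(\delta)$ multiple of $\|f\|^{2}$ is to trade the Sobolev norm for $\|f\|$ via Bernstein, which is available solely because $f$ is $\omega$-band limited. The delicate bookkeeping is to match the differentiation order $m$ to the exponent of $\delta$ in (\ref{rate}): with $m=d$ the product $r^{2m}(\omega^{2}+\|\rho\|^{2})^{m}$ scales like $\delta^{2}$, exactly cancelling the factor $\delta^{-1}$ and leaving a genuine $O(\delta)$ error, whereas an $m$ with $d/2<m<d$ would leave an error of order $\delta^{2m/d-1}$ with exponent strictly below $1$, too large to fit inside the $(1-\delta)$ margin as $\delta\to 0$. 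One must also keep in mind that Lemma \ref{GPI} carries its own smallness hypothesis $r<c\delta$, used there to absorb the frequency-independent term $r^{2}\delta^{-1}\|f\|^{2}$; accordingly $r$ is taken below both $c\delta$ and the frequency-dependent bound (\ref{rate}), the latter being the quantitative condition tying together $r$, $\delta$ and $\omega$. By contrast the upper bound is routine and uses neither (\ref{rate}) nor band limitation, only the weight normalization (\ref{measure-condition}).
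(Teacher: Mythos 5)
Your proposal is correct and follows essentially the same route as the paper's own proof: the upper bound via Cauchy--Schwarz together with the weight normalization (\ref{measure-condition}) and disjointness of the $U_{k}$ (plus $\sum_{\nu}\varphi_{\nu}^{2}=1$), and the lower bound via Lemma \ref{GPI} with $m=d$ combined with the Bernstein inequality, absorbing the residual $Ca_{0}^{2d}\delta\|f\|^{2}$ by the choice of $a_{0}$. Your explicit observation that $r$ must also satisfy the hypothesis $r<c\delta$ of Lemma \ref{GPI} agrees with what the paper's proof actually invokes, even though the theorem statement records only condition (\ref{rate}).
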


\begin{rem}\label{rem}
Note that functions $\theta_{\nu, k}$ do not belong to $PW_{\omega}(X)$. The theorem actually says that projections $\theta_{ \nu; \omega, k}$ of these functions onto $PW_{\omega}(X)$ form an almost Parseval frame in $PW_{\omega}(X)$.
\end{rem}

\begin{proof}
By using  the Schwartz inequality 
we obtain for $f\in L_{2}(X)$
$$
\sum_{\nu}\sum_{k}|U_{k}| |\Psi_{k}(\varphi_{\nu}f)|^{2}=\sum_{\nu}\sum_{k}\frac{|U_{k}|}{|U_{k}|_{\psi_{k}} ^{2}}\left|\int_{U_{k}}\psi_{k}\varphi_{\nu}fd\mu(x)\right|^{2}\leq
$$
\begin{equation}\label{R-side}
\sum_{\nu}\sum_{k}\frac{|U_{k}|}{|U_{k}|_{\psi_{k}} }\int_{U_{k}}|\varphi_{\nu}f|^{2}d\mu(x)\leq (1+\delta)
\sum_{\nu} \int_{B(y_{\nu},\lambda)}|\varphi_{\nu}f|^{2}d\mu(x)=(1+\delta)\|f\|^{2}_{L_{2}(X)},
\end{equation}
where we used the assumption (\ref{measure-condition}).
According to the previous lemma, there exist $c=c(X),\>C=C(X)$ such that for any $0<\delta<1$ and any $\rho<c\delta$
\begin{equation}\label{ineq-3}
(1-2\delta/3)\|f\|^{2}_{L_{2}(X)}\leq 
\sum_{k}|U_{k}| |\Psi_{k}(\varphi_{\nu}f)|^{2}+Cr^{2d}\delta^{-1}\|\Delta^{d/2}f\|^{2}_{L_{2}(X)}.
\end{equation}
Notice, that if $f\in PW_{\omega}(X)$, then the Bernstein inequality holds
\begin{equation}\label{bern}
\|\Delta^{d/2}f\|^{2}_{L_{2}(X)}\leq (\omega^{2}+\|\rho\|^{2})^{d}\|f\|^{2}_{L_{2}(X)}.
\end{equation}
Inequalities (\ref{ineq-3}) and (\ref{bern})  show that for a certain $a_{0}=a_{0}(X)$, if  
$$
r<a_{0}\delta^{1/d}(\omega^{2}+\|\rho\|^{2})^{-1/2},
$$
 then 
\begin{equation}\label{L-side}
(1-\delta)\|f\|^{2}_{L_{2}(X)}\leq \sum_{k}|U_{k}| |\Psi_{k}(\varphi_{\nu}f)|^{2}\leq (1+\delta)\|f\|^{2}_{L_{2}(X)},
\end{equation}
where $\>0<\delta<1,\>\>f\in PW_{\omega}(X)$. Theorem is proved.

\end{proof}

\section{Nearly Parseval Paley-Wiener frames on  $X=G/K$}\label{FFFF}

\subsection{Paley-Wiener almost Parseval frames on $X=G/K$}\label{5.1}

 Let $g\in C^{\infty}(\mathbb{R}_{+})$ be a monotonic function such that $supp\>g\subset [0,\>  2], $ and $g(s)=1$ for $s\in [0,\>1], \>0\leq g(s)\leq 1, \>s>0.$ Setting  $Q(s)=g(s)-g(2s)$ implies that $0\leq Q(s)\leq 1, \>\>s\in supp\>Q\subset [2^{-1},\>2].$  Clearly, $supp\>Q(2^{-j}s)\subset [2^{j-1}, 2^{j+1}],\>j\geq 1.$ For the functions
 \begin{equation}\label{partition}
 F_{0}(\lambda, b)=\sqrt{ g(\|\lambda\|)\otimes 1_{\mathcal{B}}}, \>\>F_{j}(\lambda, b)=\sqrt{Q(2^{-j}\|\lambda\|)\otimes 1_{\mathcal{B}}},\>\>j\geq 1, \>\>\>
 \end{equation}
 one has 
 \begin{equation}\label{id-0}
 \sum_{j\geq 0}F_{j}^{2}(\lambda, b)=1_{\mathbb{R}^{n}\times{\mathcal{B}}}.
 \end{equation}
 By using the fact the Helgason-Fourier transform $\mathcal{F}$ and its inverse $\mathcal{F}^{-1}$ are isomorphisms between the spaces $L_{2}(X,d\mu(x))=L_{2}(X)$ and
$L_{2}(\textbf{a}^{*}_{+}\times \mathcal{B},
|c(\lambda)|^{-2}d\lambda db)$ and by using the Paseval's   formula  we can introduce the following self-adjoint bounded operator for every smooth compactly supported function $\Phi$ in $C_{0}^{\infty}(\mathbb{R}^{n}\times\mathcal{B})$ 
\begin{equation}\label{projector}
\Phi(\Delta)f=\mathcal{F}^{-1}\left(\Phi(\lambda, b)\mathcal{F}f(\lambda, b)\right)
\end{equation}
which maps  $L_{2}(X)$ onto $PW_{[\omega_{1},\>\>\omega_{2}]}(X)$ if $supp \>\Phi\subset [\omega_{1},\>\>\omega_{2}]$.  

We are using this definition in the case $F_{j}^{2}=\Phi$:
$$
F_{j}^{2}(\Delta)f=\mathcal{F}^{-1}\left(F_{j}^{2}(\lambda, b)\mathcal{F}f(\lambda, b)\right).
$$
Taking inner product with $f$  we obtain
$$
\|F_{j}(\Delta)f\|^{2}=\left<F_{j}^{2}(\Delta)f, f\right>
$$
and then (\ref{id-0}) gives
\begin{equation}
\label{norm equality-0}
\|f\|^2=\sum_{j\geq 0}\left< F_{j}^2(\Delta)f,f\right>=\sum_{j\geq 0}\|F_{j}(\Delta)f\|^2 .
\end{equation}

Let $\theta_{\nu, k}$ be the same as in Definition \ref{FrameDef} and $\theta_{\nu; j, k}\in PW_{2^{j+1}}(X)$ be their orthogonal projections on $ PW_{2^{j+1}}(X)$.

 According to    Theorem \ref{Frame-th} for a fixed $0<\delta<1$ there exists a constant $a_{0}=a_{0}(X)$ such that if for $\omega_{j}=2^{j+1}$ and 
$$
r_{j}=
a_{0}\delta^{1/d}(\omega_{j}^{2}+\|\rho\|^{2}          )^{-1/2}=a_{0}\delta^{1/d}(2^{2j+2}+\|\rho\|^{2})^{-1/2},\>\>\>j\in \mathbb{N}\cup {0},
$$
and the weight functions $\psi_{j,k}$ chosen in a way that 
\begin{equation}\label{measure-condition}
1\leq\sup_{j,k}\frac{|U_{j,k}|}{|U_{j,k}|_{\psi_{j,k}} }\leq 1+\delta,
\end{equation}
then the  set of functions $\theta_{\nu; j, k}\in PW_{2^{j+1}}(X)$ 
 form a frame in $PW_{2^{j+1}}(X)$ and
\begin{equation}\label{frame-ineq-PW}
(1-\delta)\|f\|^{2}\leq \sum_{\nu}\sum_{k}\left |\left<f, \theta_{\nu; j, k}\right>\right|^{2}\leq(1+\delta)\|f\|^{2},
\end{equation}
where $\>0<\delta<1,\>\>f\in PW_{2^{j+1}}(X).$ Since $F_{j}( \Delta)f\in PW_{2^{j+1}}(X)$ we can apply (\ref{projector}), (\ref{norm equality-0}), (\ref{frame-ineq-PW}) to obtain
\begin{equation}\label{Basic}
(1-\delta)\left \|F_{j}( \Delta)f\right\|^{2}\leq
\sum_{\nu}\sum _{ k}\left|\left<F_{j}( \Delta)f, \theta_{\nu; j,k}\right>\right|^{2}\leq(1+\delta) \left\| \>F_{j}( \Delta)f\right\|^{2}.
\end{equation}
Since operator $F_{j}(\Delta)$ is self-adjoint
we obtain (via (\ref{norm equality-0})), that for the functions
\begin{equation}\label{frame-functions}
\Theta_{\nu; j,k}=F_{j}( \Delta)\theta_{\nu; j,k}
\end{equation}
which belong  to $ PW_{[2^{j-1},\>\>2^{j+1}]}(X)$,
the following frame inequalities hold
\begin{equation}\label{frame-ineq}
(1-\delta)\|f\|^{2}\leq\sum_{j\geq 0}\sum_{\nu} \sum _{k}\left|\left<f, \Theta_{\nu; j,k}\right>\right|^{2}\leq(1+\delta) \|f\|^{2},\>\>\>\>f\in L_{2}(X).
\end{equation}

\subsection{Space localization of functions $\Theta_{\nu; j,k}$}

One has

\begin{equation}\label{frame-functions-1}
\Theta_{\nu; j,k}=F_{j}( \Delta)\theta_{\nu; j,k}=\frac{\sqrt{|U_{k}|}}{|U_{k}|_{\psi_{k}} }\mathcal{F}^{-1}\left(F_{j}(\lambda, b)\mathcal{F}  \psi_{j, k}\varphi_{\nu}(\lambda,b)\right)
\end{equation}
Since $\psi_{j, k}$ and $\varphi_{\nu}$ belong to $C_{0}^{\infty}(X)$ the Paley-Wiener Theorem for the Helgason-Fourier transform shows that $\mathcal{F} \psi_{j, k}\varphi_{\nu}$ belongs to $
\mathcal{H}(\textbf{a}^{*}_{\mathbb{C}}\times \mathcal{B})^{W}$. Note that $F_{j}$ belongs to $C_{0}^{\infty}(\mathbb{R}^{n}\times \mathcal{B})$, radial in $\lambda$, and independent on $b\in \mathcal{B}$. In other words the function $F_{j}(\lambda, b)\mathcal{F}  \psi_{j, k}\varphi_{\nu}$ belongs to $C_{0}^{\infty}(\textbf{a}^{*}\times \mathcal{B})^{W}$ and by the Eguchi Theorem \ref{E} the function $\mathcal{F}^{-1}\left(F_{j}(\lambda, b)\mathcal{F}  \psi_{j, k}\varphi_{\nu}\right)$ belongs to the Schwartz space $S^{2}(X)$.

To summarize, we proved the frame inequalities (\ref{frame-ineq}) for any $0<\delta<1$, where every function $\Theta_{\nu; j,k}$ belongs to 
$$
 PW_{[2^{2j-2},\>\>2^{2j+2}]}(X)\cap S^{2}(X).
$$
The Theorem \ref{FrTh} is proved.

\section{Besov spaces}\label{Besov}

We are going to remind a few basic facts from the theory of interpolation and  approximation spaces spaces \cite{BL}, \cite{BB}, \cite{KPS}.

Let $E$ be a linear space. A quasi-norm $\|\cdot\|_{E}$ on $E$ is
a real-valued function on $E$ such that for any $f,f_{1}, f_{2}\in
E$ the following holds true

\begin{enumerate}
\item $\|f\|_{E}\geq 0;$

\item $\|f\|_{E}=0  \Longleftrightarrow   f=0;$

\item $\|-f\|_{E}=\|f\|_{E};$

\item $\|f_{1}+f_{2}\|_{E}\leq C_{E}(\|f_{1}\|_{E}+\|f_{2}\|_{E}),
C_{E}>1.$

\end{enumerate}

We say that two quasi-normed linear spaces $E$ and $F$ form a
pair, if they are linear subspaces of a linear space $\mathcal{A}$
and the conditions $\|f_{k}-g\|_{E}\rightarrow 0,$ and
$\|f_{k}-h\|_{F}\rightarrow 0, f_{k}, g, h \in \mathcal{A}, $
imply equality $g=h$. For a such pair $E,F$ one can construct a
new quasi-normed linear space $E\bigcap F$ with quasi-norm
$$
\|f\|_{E\bigcap F}=\max\left(\|f\|_{E},\|f\|_{F}\right)
$$
and another one $E+F$ with the quasi-norm
$$
\|f\|_{E+ F}=\inf_{f=f_{0}+f_{1},f_{0}\in E, f_{1}\in
F}\left(\|f_{0}\|_{E}+\|f_{1}\|_{F}\right).
$$

All quasi-normed spaces $H$ for which $E\bigcap F\subset H \subset
E+F$ are called intermediate between $E$ and $F$. A group
homomorphism $T: E\rightarrow F$ is called bounded if
$$
\|T\|=\sup_{f\in E,f\neq 0}\|Tf\|_{F}/\|f\|_{E}<\infty.
$$
One says that an intermediate quasi-normed linear space $H$
interpolates between $E$ and $F$ if every bounded homomorphism $T:
E+F\rightarrow E+F$ which is a bounded homomorphism of $E$ into
$E$ and a bounded homomorphism of $F$ into $F$ is also a bounded
homomorphism of $H$ into $H$.

On $E+F$ one considers the so-called Peetere's $K$-functional
\begin{equation}
K(f, t)=K(f, t,E, F)=\inf_{f=f_{0}+f_{1},f_{0}\in E, f_{1}\in
F}\left(\|f_{0}\|_{E}+t\|f_{1}\|_{F}\right).\label{K}
\end{equation}
The quasi-normed linear space $(E,F)^{K}_{\theta,q}, 0<\theta<1,
0<q\leq \infty,$ or $0\leq\theta\leq 1,  q= \infty,$ is introduced
as a set of elements $f$ in $E+F$ for which
\begin{equation}
\|f\|_{\theta,q}=\left(\int_{0}^{\infty}
\left(t^{-\theta}K(f,t)\right)^{q}\frac{dt}{t}\right)^{1/q}.\label{Knorm}
\end{equation}

It turns out that $(E,F)^{K}_{\theta,q}, 0<\theta<1, 0\leq q\leq
\infty,$ or $0\leq\theta\leq 1,  q= \infty,$ with the quasi-norm
(\ref{Knorm})  interpolates between $E$ and $F$. 

Let us introduce another functional on $E+F$, where $E$ and $F$
form a pair of quasi-normed linear spaces
$$
\mathcal{E}(f, t)=\mathcal{E}(f, t,E, F)=\inf_{g\in F,
\|g\|_{F}\leq t}\|f-g\|_{E}.
$$

\begin{defn}The approximation space $\mathcal{E}_{\alpha,q}(E, F),
0<\alpha<\infty, 0<q\leq \infty $ is a quasi-normed linear spaces
of all $f\in E+F$ with the following quasi-norm
\begin{equation}
\left(\int_{0}^{\infty}\left(t^{\alpha}\mathcal{E}(f,
t)\right)^{q}\frac{dt}{t}\right)^{1/q}.
\end{equation}
\end{defn}

\begin{thm}\label{equivalence}
 Suppose that $\mathcal{T}\subset F\subset E$ are quasi-normed
linear spaces and $E$ and $F$ are complete.

 If there
exist $C>0$ and $\beta >0$ such that for any $f\in F$ the
following Jackson-type inequality is verified
\begin{equation}
t^{\beta}\mathcal{E}(t,f,\mathcal{T},E)\leq C\|f\|_{F},\label{dir}
t>0,
\end{equation}
 then the following embedding holds true
\begin{equation}
(E,F)^{K}_{\theta,q}\subset \mathcal{E}_{\theta\beta,q}(E,
\mathcal{T}), \>0<\theta<1, \>0<q\leq \infty.
\end{equation}

If there exist $C>0$ and $\beta>0$ such that for any $f\in \mathcal{T}$ the following Bernstein-type inequality holds
\begin{equation}
\|f\|_{F}\leq C\|f\|^{\beta}_{\mathcal{T}}\|f\|_{E}
\end{equation}
then 
\begin{equation}
\mathcal{E}_{\theta\beta, q}(E, \mathcal{T})\subset (F, F)^{K}_{\theta, q}, , \>0<\theta<1, \>0<q\leq \infty.
\end{equation}
\end{thm}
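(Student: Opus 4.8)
The statement is a pair of abstract comparison results relating the $K$-functional interpolation spaces $(E,F)^{K}_{\theta,q}$ to the approximation spaces $\mathcal{E}_{\alpha,q}(E,\mathcal{T})$, and the standard route is to compare the two defining functionals $K(f,t)$ and $\mathcal{E}(f,t)$ pointwise in $t$ and then transfer the comparison into the weighted $L_q(dt/t)$ quasi-norms. First I would fix the setup: recall that $\mathcal{E}(f,t)=\inf_{\|g\|_{F}\le t}\|f-g\|_{E}$ and $K(f,t)=\inf_{f=f_0+f_1}(\|f_0\|_E+t\|f_1\|_F)$, and note that both are nonincreasing/concave-type functions of $t$, so their $L_q(dt/t)$ averages can be discretized over the dyadic scale $t=2^\ell$. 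The heart of the matter is a pointwise inequality between $\mathcal{E}(f,\cdot)$ and $K(f,\cdot)$ that is forced by the two hypotheses.

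For the first embedding I would use the Jackson-type hypothesis $t^{\beta}\mathcal{E}(t,f,\mathcal{T},E)\le C\|f\|_{F}$. The idea is to estimate $\mathcal{E}(f,t,\mathcal{T},E)$ for a general $f\in E+F$ by splitting $f=f_0+f_1$ with $f_0\in E$, $f_1\in F$ as in the definition of $K$. Applying the Jackson inequality to the $F$-part $f_1$ produces an approximant from $\mathcal{T}$ of controlled norm whose $E$-error is bounded by $C\|f_1\|_F\,s^{-\beta}$, while the $E$-part $f_0$ contributes directly an error $\|f_0\|_E$. Optimizing over the splitting and over the budget parameter $s$ yields a pointwise bound of the shape $\mathcal{E}(f,s^{\beta})\lesssim K(f,s^{\beta})$ (up to the correct change of the time variable $t\leftrightarrow s^{\beta}$), using the quasi-triangle inequality for $\|\cdot\|_E$ to combine the two error contributions. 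Raising to the $q$-th power, integrating against $dt/t$, and performing the substitution $t=s^{\beta}$ (which only rescales the measure $dt/t$ by the constant factor $\beta$) then delivers the embedding $(E,F)^{K}_{\theta,q}\subset\mathcal{E}_{\theta\beta,q}(E,\mathcal{T})$.

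For the converse embedding I would run the dual argument using the Bernstein-type hypothesis $\|f\|_{F}\le C\|f\|^{\beta}_{\mathcal{T}}\|f\|_{E}$. Here one takes a near-optimal approximating family $g_t\in\mathcal{T}$ realizing $\mathcal{E}(f,t)$ and telescopes the differences $g_{2^{\ell+1}}-g_{2^{\ell}}\in\mathcal{T}$; the Bernstein inequality converts the $\mathcal{T}$-norm control of these increments together with their $E$-norm errors into $F$-norm estimates, which one sums to bound $K(f,t)$ by a dyadic sum of the approximation errors. The main obstacle I anticipate is bookkeeping the quasi-norm constants $C_E$ through the telescoping sum: because $\|\cdot\|_E$ is only a quasi-norm, the triangle inequality carries multiplicative constants, and one must invoke the Aoki--Rolewicz-type device (replacing the quasi-norm by an equivalent $p$-norm for a suitable $0<p\le 1$) to ensure the geometric series of errors converges with a constant independent of $f$. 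Once that summation is controlled, the substitution $t=s^{\beta}$ again matches the indices and produces $\mathcal{E}_{\theta\beta,q}(E,\mathcal{T})\subset(F,F)^{K}_{\theta,q}$, completing the proof.
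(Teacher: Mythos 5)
The paper gives no proof of Theorem \ref{equivalence}: it is quoted as a known fact from the interpolation--approximation literature (\cite{BL}, \cite{BB}, \cite{KPS}), so there is no in-paper argument to compare against. Your outline reproduces the classical proof found in those sources, and its architecture is sound: for the direct part, split $f=f_{0}+f_{1}$, apply Jackson to $f_{1}$, and infimize to get a pointwise comparison of the two functionals; for the converse, telescope near-best approximants $g_{2^{\ell}}\in\mathcal{T}$ and use Bernstein to convert $\|g_{2^{\ell+1}}-g_{2^{\ell}}\|_{\mathcal{T}}\leq C\,2^{\ell}$ together with $\|g_{2^{\ell+1}}-g_{2^{\ell}}\|_{E}\leq C_{E}\bigl(\mathcal{E}(f,2^{\ell})+\mathcal{E}(f,2^{\ell+1})\bigr)$ into $\|g_{2^{\ell+1}}-g_{2^{\ell}}\|_{F}\leq C\,2^{\ell\beta}\mathcal{E}(f,2^{\ell})$, with the Aoki--Rolewicz renorming (legitimately available since $E$ and $F$ are assumed complete) handling the quasi-norm constants in the infinite sum.

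Two points need tightening. First, your displayed ``shape'' $\mathcal{E}(f,s^{\beta})\leq C\,K(f,s^{\beta})$ is written with the wrong variable relation: the Jackson hypothesis gives $\mathcal{E}(t,f,\mathcal{T},E)\leq C'\,K\left(f,\,C\,t^{-\beta}\right)$, i.e.\ the time variables are related by the \emph{inverse} power $u=t^{-\beta}$; this inversion is exactly what reconciles the weight $t^{+\theta\beta}$ in the approximation quasi-norm with the weight $t^{-\theta}$ in (\ref{Knorm}), and your hedge about ``the correct change of the time variable'' only works once this is made explicit. Second, in the converse direction the telescoping yields $K\bigl(f,2^{-n\beta}\bigr)\leq C\Bigl(\mathcal{E}(f,2^{n})+2^{-n\beta}\sum_{\ell\leq n}2^{\ell\beta}\mathcal{E}(f,2^{\ell})\Bigr)$, a \emph{cumulative} sum rather than a single term, so passing to the weighted $\ell_{q}$ quasi-norm requires the discrete Hardy inequality (the same tool the paper invokes in the proof of Theorem \ref{mainLemma1}); ``the substitution matches the indices'' does not by itself close this step. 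Finally, note that the target $(F,F)^{K}_{\theta,q}$ in the statement is evidently a misprint for $(E,F)^{K}_{\theta,q}$, which is the space your telescoping argument actually lands in.
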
\label{intthm}

Now we return to the situation on $X$. 
The inhomogeneous  Besov space $ \mathbf{B}_{2,q}^\alpha(X)$ is introduced as an interpolation space between the Hilbert space $L_{2}(X)$ and Sobolev space $H^{r}(X)$ where
 $r$ can be any natural number such that $0<\alpha<r, 1\leq
q\leq\infty$. Namely, we have 
$$
\mathbf{B}^{\alpha}_{2,q}(X)=( L_{2}(X), H^{r}(X))^{K}_{\theta, q},\>\>\> 0<\theta=\alpha/r<1,\>\>\>
1\leq q\leq \infty.
$$
where $K$ is the Peetre's interpolation functor.

We introduce a notion of best
approximation
\begin{equation}
\mathcal{E}(f,\omega)=\inf_{g\in
PW_{\omega}(X)}\|f-g\|_{L_{2}(X)}.\label{BA1}
\end{equation}

Our goal is to apply Theorem \ref{equivalence} in the situation where $E$ is the linear space $L_{2}(X)$ with its regular norm,  $\>F$ is the Sobolev space $H^{r}(X),$ with the graph norm $(I+\Delta)^{r/2}$, and $\mathcal{T}=PW_{\omega}(X)$ 
which is equipped with the quasi-norm
$$
\|f\|_{\mathcal{T}}=\sup\>\{\>|\lambda| : (\lambda, b)\in supp\>\widehat{f}\>\}.
$$

By using Plancherel Theorem it is easy to verify a  generalization of the Bernstein inequality for  Paley-Wiener functions $f\in PW_{\omega}(X)$:
$$
\|\Delta^{r}f\|_{L_{2}(X)}\leq \omega ^{r}\|f\|_{L_{2}(X)},\>\>\>r\in \mathbb{R}_{+},
$$
and  an analog  of the Jackson inequality:
$$
\mathcal{E}(f,\omega)\leq \omega^{-r}\|\Delta^{r}f\|_{L_{2}(X)},\>\>\>r\in \mathbb{R}_{+}.
$$

These two inequalities and Theorem \ref{equivalence}  imply the following result (compare to  \cite{Pes88}, \cite{Pes09a}- \cite{Pes11}).

\begin{thm} \label{approx}
The norm of the Besov space $\mathbf{B}_{2,q}^{\alpha}(X), \alpha>0, 1\leq q\leq\infty$ is
equivalent to the following norm
\begin{equation}
\|f\|_{L_{2}(X)}+\left(\sum_{k=0}^{\infty}\left(2^{k\alpha }\mathcal{E}(f,
2^{k})\right)^{q}\right)^{1/q}.
\end{equation}
\label{maintheorem1}
\end{thm}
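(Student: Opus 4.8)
The plan is to derive Theorem \ref{approx} directly from the abstract interpolation machinery in Theorem \ref{equivalence}, specialized to the concrete triple $\mathcal{T}=PW_{\omega}(X)\subset F=H^{r}(X)\subset E=L_{2}(X)$. The central observation is that the approximation space $\mathcal{E}_{\alpha,q}(L_{2}(X),PW(X))$, defined through the functional $\mathcal{E}(f,t)=\inf_{\|g\|_{\mathcal{T}}\leq t}\|f-g\|_{L_{2}(X)}$, coincides with the Besov space $\mathbf{B}_{2,q}^{\alpha}(X)=(L_{2}(X),H^{r}(X))^{K}_{\alpha/r,q}$, and that the quoted discretized sum $\bigl(\sum_{k}(2^{k\alpha}\mathcal{E}(f,2^{k}))^{q}\bigr)^{1/q}$ is merely the dyadic discretization of the continuous approximation-space quasi-norm $\bigl(\int_{0}^{\infty}(t^{\alpha}\mathcal{E}(f,t))^{q}\,dt/t\bigr)^{1/q}$.

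First I would record the two constituent inequalities already stated just before the theorem: the Bernstein inequality $\|\Delta^{r}f\|_{L_{2}(X)}\leq\omega^{r}\|f\|_{L_{2}(X)}$ valid for $f\in PW_{\omega}(X)$, and the Jackson inequality $\mathcal{E}(f,\omega)\leq\omega^{-r}\|\Delta^{r}f\|_{L_{2}(X)}$. These are exactly the hypotheses of Theorem \ref{equivalence} with $\beta=r$: the Jackson estimate furnishes $t^{r}\mathcal{E}(t,f,\mathcal{T},E)\leq C\|f\|_{F}$ (since the graph norm on $H^{r}(X)$ controls $\|\Delta^{r/2}f\|$), yielding the embedding $(E,F)^{K}_{\theta,q}\subset\mathcal{E}_{\theta r,q}(E,\mathcal{T})$, while the Bernstein estimate gives the reverse inclusion $\mathcal{E}_{\theta r,q}(E,\mathcal{T})\subset(E,F)^{K}_{\theta,q}$. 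Setting $\theta=\alpha/r$ so that $\theta r=\alpha$, the two embeddings combine into an identification of $\mathbf{B}_{2,q}^{\alpha}(X)$ with $\mathcal{E}_{\alpha,q}(L_{2}(X),PW(X))$ with equivalent quasi-norms, and this equivalence is independent of the auxiliary choice of $r>\alpha$.

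Next I would pass from the continuous quasi-norm of the approximation space to the dyadic sum in the statement. Since $\mathcal{E}(f,t)$ is a nonincreasing function of $t$, on each dyadic block $t\in[2^{k},2^{k+1})$ one has $\mathcal{E}(f,2^{k+1})\leq\mathcal{E}(f,t)\leq\mathcal{E}(f,2^{k})$, so the integral $\int_{0}^{\infty}(t^{\alpha}\mathcal{E}(f,t))^{q}\,dt/t$ is comparable, with constants depending only on $\alpha$ and $q$, to $\sum_{k\in\mathbb{Z}}(2^{k\alpha}\mathcal{E}(f,2^{k}))^{q}$. Because $PW_{\omega}(X)$ collapses to $\{0\}$ as $\omega\to 0$ and $\mathcal{E}(f,t)\leq\|f\|_{L_{2}(X)}$ for every $t$, the negative-index tail of the sum is controlled by $\|f\|_{L_{2}(X)}$; absorbing that tail into the separate $\|f\|_{L_{2}(X)}$ term lets one restrict the sum to $k\geq 0$, producing exactly the norm displayed in Theorem \ref{approx}. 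The $q=\infty$ case is handled by replacing the $\ell^{q}$ sums by suprema throughout.

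The main obstacle, and the only place demanding genuine care, is verifying the Jackson inequality with the sharp dependence on the spectral localization so that Theorem \ref{equivalence} applies with $\beta=r$ uniformly; this rests on the Plancherel theorem for the Helgason--Fourier transform together with the spectral identity $\widehat{\Delta f}(\lambda,b)=-(\|\lambda\|^{2}+\|\rho\|^{2})\widehat{f}(\lambda,b)$, which I would exploit by choosing the best approximant $g$ to be the spectral truncation $P_{\Pi_{\omega}}f$ whose Helgason--Fourier transform is $\widehat{f}$ restricted to $\Pi_{\omega}$, and then estimating $\|f-g\|_{L_{2}(X)}^{2}=\int_{\|\lambda\|>\omega}|\widehat{f}|^{2}|c(\lambda)|^{-2}\,d\lambda\,db$ against $\omega^{-2r}\|\Delta^{r}f\|^{2}$. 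Once this truncation argument and the matching Bernstein bound are in hand, the rest is the routine interpolation-plus-discretization bookkeeping outlined above.
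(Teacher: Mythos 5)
Your proposal is correct and follows essentially the same route as the paper, which obtains Theorem \ref{approx} exactly by feeding the Bernstein and Jackson inequalities for $PW_{\omega}(X)$ into the abstract Theorem \ref{equivalence} with $E=L_{2}(X)$, $F=H^{r}(X)$, $\mathcal{T}=PW_{\omega}(X)$, $\theta=\alpha/r$. The only difference is that you spell out steps the paper leaves implicit --- the spectral-truncation proof of the Jackson inequality via the Plancherel theorem for the Helgason--Fourier transform, and the dyadic discretization of the approximation-space quasi-norm with the low-frequency tail absorbed into $\|f\|_{L_{2}(X)}$ --- all of which is sound.
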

Let	function $F_{j}$ be the same as in subsection \ref{5.1} and  
\begin{equation}\label{range}
F_j\left(\Delta\right):  L_{2}(X)  \rightarrow PW_{[2^{j-1},  2^{j+1}]}(X),\>\>\> \left\|F_j\left(\Delta\right)\right\|\leq1.
 \end{equation}

    \begin{thm}\label{mainLemma1}
     The norm of the Besov space $\mathbf{B}_{2,q}^{\alpha}(X)$  for $\alpha>0, 1\leq
q\leq \infty$ is equivalent to
\begin{equation} 
\left(\sum_{j=0}^{\infty}\left(2^{j\alpha
}\left \|F_j\left(\Delta\right)f\right \|_{L_{2}(X)}\right)^{q}\right)^{1/q},
\label{normequiv-1}
\end{equation}
  with the standard modifications for $q=\infty$.
\end{thm}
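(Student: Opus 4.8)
The plan is to reduce the claim to the best-approximation characterization already established in Theorem~\ref{approx}, exploiting the Littlewood-Paley decomposition $\sum_{j\ge 0}F_j^2=1_{\mathbb{R}^{n}\times\mathcal{B}}$ of (\ref{id-0}), the ensuing Plancherel identity $\|f\|^2=\sum_{j\ge 0}\|F_j(\Delta)f\|^2$ of (\ref{norm equality-0}), the operator bound $\|F_j(\Delta)\|\le 1$ of (\ref{range}), and the finite overlap of the spectral windows: each $F_j$ is supported where $2^{j-1}\le\|\lambda\|\le 2^{j+1}$, so the Helgason-Fourier supports of $F_j^2(\Delta)f$ and $F_{j'}^2(\Delta)f$ meet in a set of positive measure only when $|j-j'|\le 1$. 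Writing $a_j=\|F_j(\Delta)f\|_{L_2(X)}$, the goal is to show that the sequence norm $\bigl(\sum_j(2^{j\alpha}a_j)^q\bigr)^{1/q}$ is equivalent to $\|f\|_{L_2(X)}+\bigl(\sum_k(2^{k\alpha}\mathcal{E}(f,2^k))^q\bigr)^{1/q}$.

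For one direction I would first note that $F_j(\Delta)$ annihilates $PW_{2^{j-1}}(X)$: if $\widehat g$ is supported in $\{\|\lambda\|\le 2^{j-1}\}$ then $F_j\,\widehat g=0$ almost everywhere, since $\mathrm{supp}\,F_j\subset\{\|\lambda\|\ge 2^{j-1}\}$ and the two sets overlap only on a null set. Hence for $j\ge 1$ and every $g\in PW_{2^{j-1}}(X)$ one has $a_j=\|F_j(\Delta)(f-g)\|\le\|f-g\|$, and taking the infimum gives $a_j\le\mathcal{E}(f,2^{j-1})$, while $a_0\le\|f\|_{L_2(X)}$. Inserting these bounds, together with a shift of index $k=j-1$, controls $\bigl(\sum_j(2^{j\alpha}a_j)^q\bigr)^{1/q}$ by the best-approximation norm, hence by the Besov norm $\|f\|_{\mathbf{B}_{2,q}^{\alpha}(X)}$ through Theorem~\ref{approx}.

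For the converse I would estimate the best approximation by the tail of the decomposition. Since $\sum_{j=0}^{M-1}F_j^2(\Delta)f$ has Helgason-Fourier support in $\{\|\lambda\|\le 2^{M}\}$, it lies in $PW_{2^M}(X)$, so $\mathcal{E}(f,2^M)\le\bigl\|\sum_{j\ge M}F_j^2(\Delta)f\bigr\|$. The finite overlap renders the tail almost orthogonal, and combined with $\|F_j^2(\Delta)f\|\le\|F_j(\Delta)f\|=a_j$ this yields $\mathcal{E}(f,2^M)\le C\bigl(\sum_{j\ge M}a_j^2\bigr)^{1/2}\le C\sum_{j\ge M}a_j$. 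Then $2^{M\alpha}\mathcal{E}(f,2^M)\le C\sum_{k\ge 0}2^{-k\alpha}\bigl(2^{(M+k)\alpha}a_{M+k}\bigr)$, and a single application of Minkowski's inequality in $\ell^q$ (the discrete Hardy inequality), whose geometric factor $\sum_{k\ge 0}2^{-k\alpha}$ converges precisely because $\alpha>0$, bounds the $\ell^q$ norm of $2^{M\alpha}\mathcal{E}(f,2^M)$ by $\bigl(\sum_j(2^{j\alpha}a_j)^q\bigr)^{1/q}$. The companion estimate $\|f\|_{L_2(X)}=\bigl(\sum_j a_j^2\bigr)^{1/2}\le\sum_j a_j$ with H\"older's inequality controls $\|f\|_{L_2(X)}$ by the same sequence norm, which completes the equivalence.

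The routine but delicate points are the uniform treatment of all exponents $1\le q\le\infty$, for which the Minkowski/Hardy step is the clean device and $q=\infty$ falls out with sup-norms, and pinning down the almost-orthogonality constant for the tail sum. The genuine mechanism, and the step I expect to carry the argument, is the converse estimate: one must convert the abstract best approximation $\mathcal{E}(f,2^M)$ into the concrete tail $\sum_{j\ge M}F_j^2(\Delta)f$ and then resum with the correct geometric weights, the positivity of $\alpha$ being exactly what makes that resummation finite.
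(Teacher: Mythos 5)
Your proof is correct and takes essentially the same route as the paper: both directions reduce to the best-approximation characterization of Theorem~\ref{approx}, with the bound $\|F_j(\Delta)f\|\le\mathcal{E}(f,2^{j-1})$ coming from the fact that $F_j(\Delta)$ annihilates $PW_{2^{j-1}}(X)$, and the reverse bound coming from the tail of the Calder\'on decomposition together with the discrete Hardy inequality. The only differences are cosmetic: you correctly write the decomposition as $\sum_j F_j^2(\Delta)f=f$, consistent with (\ref{id-0}), where the paper's proof writes $\sum_j F_j(\Delta)f=f$, and your almost-orthogonality remark is superfluous since the triangle inequality already gives $\mathcal{E}(f,2^M)\le\sum_{j\ge M}\|F_j^2(\Delta)f\|\le\sum_{j\ge M}\|F_j(\Delta)f\|$.
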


\begin{proof}

In the same notations as above the following version of Calder\'on decomposition holds:
$$
\sum_{j\in \mathbb{N}} F_j\left(\Delta\right)f= f,\> \>\>f\in L_{2}(X).
$$
We obviously have
$$
\mathcal{E}(f, 2^{k})\leq \sum_{j> k} \left \|F_j\left(\Delta\right)f\right \|_{L_{2}(X)}.
$$
By using the discrete Hardy inequality \cite{BB}  we obtain the estimate
\begin{equation} \label{direct}
\|f\|+\left(\sum_{k=0}^{\infty}\left(2^{k\alpha }\mathcal{E}(f,
2^{k})\right)^{q}\right)^{1/q}\leq C \left(\sum_{j=0}^{\infty}\left(2^{j\alpha
}\left \|F_j\left(\Delta\right)f\right \|_{L_{2}(X)}\right)^{q}\right)^{1/q}
\end{equation}
Conversely, 
 for any $g\in PW_{2^{j-1}}(X)$ we have
$$
\left\|F_j\left(\Delta\right)f\right\|_{L_{2}(X)}=\left\|F_{j}\left(\Delta\right)(f-g)\right\|_{L_{2}(X)}\leq \|f-g\|_{L_{2}(X)}.
$$
It gives the inequality 
$$
\left\|F_j\left(\Delta\right)f\right\|_{L_{2}(X)}\leq \mathcal{E}(f,\>2^{j-1}),
$$
which shows that the inequality opposite to (\ref{direct}) holds.
 This completes the proof.

\end{proof}

 \begin{thm}\label{mainLemma2}
     The norm of the Besov space $\mathbf{B}_{2,q}^{\alpha}(X)$  for $\alpha>0, 1\leq
q\leq \infty$ is equivalent  to
\begin{equation} 
 \left(\sum_{j=0}^{\infty}2^{j\alpha q }
\left(\sum_{\nu, k}\left|\left<f,\Theta_{\nu, j, k}\right>\right|^{2}\right)^{q/2}\right)^{1/q},%\asymp \|f\|_{B_{q}^{\alpha}} 
\label{normequiv}
\end{equation}
  with the standard modifications for $q=\infty$.
\end{thm}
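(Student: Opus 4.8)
The plan is to reduce the claimed equivalence to Theorem \ref{mainLemma1} by identifying the inner sum $\sum_{\nu,k}|\langle f,\Theta_{\nu,j,k}\rangle|^2$ with $\|F_j(\Delta)f\|^2$ up to fixed multiplicative constants. First I would recall that, by Theorem \ref{mainLemma1}, the Besov norm $\|f\|_{\mathbf{B}_{2,q}^{\alpha}(X)}$ is equivalent to $\left(\sum_{j=0}^{\infty}\bigl(2^{j\alpha}\|F_j(\Delta)f\|_{L_2(X)}\bigr)^q\right)^{1/q}$. Consequently it suffices to prove that there are constants $0<A\leq B<\infty$, independent of both $f$ and $j$, with $A\|F_j(\Delta)f\|^2\leq \sum_{\nu,k}|\langle f,\Theta_{\nu;j,k}\rangle|^2\leq B\|F_j(\Delta)f\|^2$ for every $j$.

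The decisive point is that the frame functions satisfy $\Theta_{\nu;j,k}=F_j(\Delta)\theta_{\nu;j,k}$ and that $F_j(\Delta)$ is self-adjoint, so that $\langle f,\Theta_{\nu;j,k}\rangle=\langle f,F_j(\Delta)\theta_{\nu;j,k}\rangle=\langle F_j(\Delta)f,\theta_{\nu;j,k}\rangle$. Substituting this identity into the frame inequality (\ref{Basic})---which was established at each level $j$ with the single pair of constants $1-\delta$ and $1+\delta$---immediately yields
\[
(1-\delta)\|F_j(\Delta)f\|^2\leq \sum_{\nu}\sum_{k}|\langle f,\Theta_{\nu;j,k}\rangle|^2\leq(1+\delta)\|F_j(\Delta)f\|^2 .
\]
Thus one may take $A=1-\delta$ and $B=1+\delta$, and these constants are the same at every scale $j$, since the whole family $\{\Theta_{\nu;j,k}\}$ was built from one fixed choice of $\delta$ in subsection \ref{5.1}.

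Finally I would raise the displayed estimate to the power $q/2$, multiply by $2^{j\alpha q}$, sum over $j\geq 0$, and take the $q$-th root, obtaining
\[
(1-\delta)^{1/2}\left(\sum_{j}\bigl(2^{j\alpha}\|F_j(\Delta)f\|\bigr)^q\right)^{1/q}\leq \left(\sum_{j}2^{j\alpha q}\Bigl(\sum_{\nu,k}|\langle f,\Theta_{\nu;j,k}\rangle|^2\Bigr)^{q/2}\right)^{1/q}\leq (1+\delta)^{1/2}\left(\sum_{j}\bigl(2^{j\alpha}\|F_j(\Delta)f\|\bigr)^q\right)^{1/q},
\]
with the usual replacement of sums by suprema when $q=\infty$. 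The outer two expressions are equivalent to the Besov norm by Theorem \ref{mainLemma1}, so the middle expression---which is exactly the quasi-norm (\ref{normequiv})---is equivalent to it as well. I do not expect any serious obstacle here; the one thing that genuinely needs checking is the uniformity in $j$ of the frame bounds, which holds precisely because $\delta$ was fixed once and for all rather than allowed to depend on the scale.
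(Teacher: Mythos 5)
Your proposal is correct and follows essentially the same route as the paper: the paper likewise rewrites $\left<f,\Theta_{\nu;j,k}\right>=\left<F_j(\Delta)f,\theta_{\nu;j,k}\right>$ via self-adjointness of $F_j(\Delta)$, invokes the uniform-in-$j$ frame inequality (\ref{Basic}) for $F_j(\Delta)f\in PW_{2^{j+1}}(X)$ to get the two-sided comparison of $\sum_{\nu,k}\left|\left<f,\Theta_{\nu;j,k}\right>\right|^{2}$ with $\left\|F_j(\Delta)f\right\|^{2}$, and then concludes by Theorem \ref{mainLemma1}. Your explicit remark about the uniformity of the constants $1\pm\delta$ across scales $j$ is a point the paper leaves implicit, but it is exactly right and changes nothing in the argument.
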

\begin{proof} 

According to (\ref{Basic}) we have \begin{equation}
(1-\delta)\left \|F_j\left(\Delta\right)f\right \|_{L_{2}(X)}^{2}\leq 
\sum _{\nu, k}\left|\left< F_{j}\left({\Delta}\right)f, \theta_{\nu,j,k}\right>\right|^{2}\leq
(1+\delta)\left \|F_j\left(\Delta\right)f\right \|_{L_{2}(X)}^{2},
\end{equation}
where  $F_j\left(\Delta\right)f\in  PW_{2^{j+1}}(X)$. Since $\Theta_{\nu, j, k}=F_{j}\left(\Delta\right)\theta_{\nu, j, k}$ we obtain for any $f\in L_{2}(X)$ 
$$
\frac{1}{1+\delta}\sum_{\nu, k}\left|\left<f,\Theta_{\nu, j, k}\right>\right|^{2}\leq \left \|F_j\left(\Delta\right)f\right \|_{L_{2}(X)}^{2}\leq \frac{1}{1-\delta}\sum_{\nu, k}\left|\left<f,\Theta_{\nu, j, k}\right>\right|^{2}.
$$
Theorem is proved. 
\end{proof}

\makeatletter
\renewcommand{\@biblabel}[1]{\hfill#1.}\makeatother

\end{document}